\newtheorem{theorem}{Theorem}
\newtheorem{corollary}{Corollary}
\newtheorem{remark}{Remark}
\newtheorem{algorithm}{Algorithm}
\theoremstyle{definition}
\newtheorem{definition}{Definition}
\def\C{\mathcal{C}}
\def\F{\mathcal{F}}
\def\K{\mathcal{K}}
\def\M{\mathcal{M}}
\def\G{\mathcal{G}}
\def\H{\mathcal{H}}
\def\I{\mathcal{I}}
\def\P{\mathcal{P}}
\def\S{\mathcal{S}}
\def\pr{\mathbb{P}}
\begin{document}

\setcounter{page}{1}     




\twoAuthorsTitleoneline {Angelo Gilio}{Giuseppe Sanfilippo}{Conditional random quantities and compounds of conditionals}
\HeadingsInfo{A. Gilio, G. Sanfilippo}{Conditional random quantities and compounds of conditionals}




\PresentedReceived{Name of Editor}{October 31, 2012}


\begin{abstract}
In this paper we consider finite conditional random quantities and conditional previsions assessments in the setting of coherence. We use a suitable representation for conditional random quantities; in particular the indicator of a conditional event $E|H$ is looked at as a three-valued quantity with values 1, or 0, or $p$, where $p$ is the probability of $E|H$. We introduce a notion of iterated conditional random quantity of the form $(X|H)|K$ defined as a suitable conditional random quantity, which coincides with $X|HK$ when $H \subseteq K$. Based on a recent paper by S. Kaufmann, we introduce a notion of conjunction of two conditional events and then we analyze it in the setting of coherence. We give a representation of the conjoined conditional and we show that this new object is a conditional random quantity. We examine some cases of logical dependencies, by also showing that the conjunction may be a conditional event; moreover, we introduce the negation of the conjunction and by De Morgan's Law the operation of disjunction. Finally, we give the lower and upper bounds for the conjunction and the disjunction of two conditional events, by showing that the usual probabilistic properties continue to hold. 
\end{abstract}

\Keywords{Conditional events, conditional random quantities, coherence, conjunction, negation, disjunction.}


\section{Introduction}

Probabilistic reasoning under coherence allows a consistent treatment of uncertainty in many applications of statistics, economy, decision theory and artificial intelligence; in particular, it is useful for a flexible numerical approach to inference rules in nonmonotonic reasoning and for the psychology of uncertain reasoning (see, e.g.,  \cite{FPMK11,GiOv12,PfKl06,PfKl09}).  The methods of coherence could also be useful to deepen some theoretical aspects related with the comparison among four well-known nonmonotonic reasoning systems made in \cite{ScTh12} by means of simulations.
In probability theory and in probability logic a relevant problem, largely discussed by many authors, is that of suitably defining logical operations among conditional events. The study of logical operations, such as conjunction and disjunction, among conditionals represents also a basic aspect  in many sectors of artificial intelligence. We recall that a pioneering paper concerning the conjunction, negation and disjunction of conditional events is that one written in 1935 by de Finetti (\cite{deFi35}), where it is proposed a three-valued logic which coincides with that one of Lukasiewicz. An interesting survey of the
contributions by different authors (such as Adams, Belnap, Calabrese, de Finetti, Dubois, van Fraassen, McGee, Goodmann, Lewis, Nguyen, Prade, Schay) to the study of three-valued logics and compounds of conditionals is given in \cite{Miln97}; an extensive study of conditionals has been made in \cite{Edgi95}; see also \cite{McGe89}. Among the many works concerning  logical operations on conditional events we recall for instance \cite{Adam75, BrGi85,Cala87,CaVa99,DuPr91,DuPr94,GoNg88,GoNW91,Scha68}.  A comparison with aspects studied in some of the above papers (with a deepening of the notion of conditional hyperprobability) has been made in \cite{GiSc94}. Logical operations among conditional events have been studied also in \cite{Capo95}, where a generalized notion of atoms for conditional events has been proposed; moreover, a comparison between classical logic and three-valued logic for conditional events has been made in \cite{BaCa96}. As we will show in this paper, the problem of suitably defining logical operations among conditional events has a natural relation with the role of coherence in probabilistic reasoning. In a recent paper by Kaufmann (\cite{Kauf09}) a theory for the compounds of conditionals has been proposed; in this paper we develop a similar theory in the framework of coherence.  In literature the usual approach to the compounds of conditionals  has been that of defining them as suitable
conditionals. In this work,  starting with the paper by Kaufmann, we show that conjunction and disjunction of conditional events in general are not conditional events but {\em conditional random quantities}. Based on the betting scheme of de Finetti (\cite{deFi70}), if we assess $\pr(X|H)=\mu$ for a conditional random quantity $X|H$, then we represent $X|H$ as a numerical quantity which coincides with $X$, or $\mu$, according to whether $H$ is true, or false. In particular, if we assess $P(E|H)=p$ for a conditional event $E|H$, then we represent (the indicator of) $E|H$ as a numerical quantity with set of possible values $\{1,0,p\}$. We recall that the problem of suitably defining the third value for the indicators of conditional events has been carefully examined in many papers by Coletti and Scozzafava (see, e.g., \cite{CoSc02}). Based on the representation of $X|H$, we obtain some results on finite conditional random quantities. Moreover, we give a meaning for the iterated conditional random quantity of the form $(X|H)|K$ as a suitable conditional random quantity which coincides in particular with $X|HK$ when $H \subseteq K$. Then, by exploiting our representation of conditional events, we suitably define the conjunction $(A|H) \wedge (B|K)$ of two conditional events $A|H, B|K$. We show cases of logical dependencies in which the conjunction reduces to a conditional event. Based on the usual definition of negation, we introduce a notion of negation for the conjoined conditional; then, based on De Morgan's Law, we define the disjunction of two conditional events. Finally, by exploiting the methods of coherence, we obtain the lower and upper bounds for the coherent extensions of a probability assessment $(x,y)$ on $\{A|H, B|K\}$ to their conjunction $(A|H) \wedge (B|K)$ and their disjunction $(A|H) \vee (B|K)$. Interestingly, the usual probabilistic properties continue to hold in terms of previsions and this aspect, in our opinion, confirms that the most suitable framework for a right approach to compounds of conditionals is that of conditional random quantities in the setting of coherence. We observe that for the
scope of our paper it is enough to consider finite random quantities.

\section{Preliminary notions and results}
In this section we recall some basic notions and results on
coherence for conditional probability assessments and for conditional prevision assessments.
\subsection{Coherent conditional probability assessments}
\
In our approach an event $A$ represents an uncertain fact described by a (non ambiguous) logical proposition; hence we look at $A$ as a two-valued logical entity which can be true ($T$), or false ($F$).
The indicator of $A$, denoted by the same symbol, is a two-valued numerical quantity which is 1, or 0, according to  whether $A$ is true, or false. The sure event is denoted by $\Omega$ and the impossible event is denoted by $\emptyset$.  Moreover, we denote by $A\land B$ (resp., $A \vee B$) the logical conjunction (resp., logical disjunction). In many cases we simply denote the conjunction between $A$ and $B$ as the product $AB$. By the symbol $A^c$ we denote the negation of $A$. Given any events $A$ and $B$, we simply write $A \subseteq B$ to denote that $A$ logically implies $B$, that is  $AB^c$ is the impossible event $\emptyset$. We recall that $n$ events are logically independent when the number of atoms, or constituents, generated by them is $2^n$. In case of some logical dependencies among the events, the number of atoms is less than $2^n$. Given any events $A$ and $B$, with $A \neq \emptyset$, the conditional event $B|A$ is looked at as a three-valued logical entity which is true (T), or false (F), or void (V), according to whether $AB$ is true, or $AB^c$ is true, or $A^c$ is true. \\
{\em Interpretation with the betting scheme}. We recall that, using the betting scheme of de Finetti (\cite{deFi70}), if you assess $P(B|A)=p$, then you agree to pay an amount $p$, by receiving 1, or 0, or $p$, according to whether $AB$ is true, or $AB^c$ is true, or $A^c$ is true (bet called off). Then, the random gain associated with the assessment $P(B|A)=p$ is $\G = sH(E-p)$, where $s$ is a non zero real number. More in general, let be given a real function $P : \; \mathcal{F} \, \rightarrow \, \mathcal{R}$, where $\mathcal{F}$ is an arbitrary family of conditional events. Given any subfamily $\mathcal{F}_n = \{E_1|H_1, \ldots, E_n|H_n\} \subseteq \mathcal{F}$, the restriction of $P$ to $\mathcal{F}_n$ is the vector $\mathcal{P}_n =(p_1, \ldots, p_n)$, where $p_i =
P(E_i|H_i) \, ,\;\; i = 1, \ldots, n$.
We denote by $\mathcal{H}_n$ the disjunction $H_1 \vee \cdots \vee H_n$.
As $E_iH_i \vee E_i^cH_i \vee H_i^c = \Omega \,,\;\; i = 1, \ldots,
n$,  by expanding the expression
$\bigwedge_{i=1}^n(E_iH_i \vee E_i^cH_i \vee H_i^c)$,
we can represent $\Omega$ as the disjunction of $3^n$ logical
conjunctions, some of which may be impossible.  The remaining ones
are the atoms, or constituents, generated by the family $\mathcal{F}_n$ and, of course, are a partition of $\Omega$. We denote by
$C_1, \ldots, C_m$ the constituents contained in $\mathcal{H}_n$ and (if
$\mathcal{H}_n \neq \Omega$) by $C_0$ the remaining constituent
$\mathcal{H}_n^c =
H_1^c \cdots H_n^c $, so that
\[
\mathcal{H}_n = C_1 \vee \cdots \vee C_m \,,\;\;\; \Omega =
\mathcal{H}_n^c \vee
\mathcal{H}_n = C_0 \vee C_1 \vee \cdots \vee C_m \,,\;\;\; m+1 \leq 3^n
\,.
\]
With $(\mathcal{F}_n, \mathcal{P}_n$) we associate the random gain
${\mathcal{G}} = \sum_{i=1}^n s_iH_i(E_i - p_i)$,
where $s_1, \ldots, s_n$ are $n$ arbitrary real numbers, which is the difference between the amount that you receive, $\sum_{i=1}^n s_i(E_iH_i + p_iH_i^c)$, and the amount that you pay, $\sum_{i=1}^n s_ip_i$. The quantity $\G$ represents the net gain from engaging each transaction $H_i(E_i - p_i)$ at the scale and direction specified by the coefficient $s_i$. Let $g_h$
be the value of $\mathcal{G}$ when $C_h$ is true; of course $g_0 = 0$.
Denoting by $\mathcal{G}_{|\mathcal{H}_n}$ the set of possible values of $\mathcal{G}$ restricted to
$\mathcal{H}_n$, it is $\mathcal{G}_{|\mathcal{H}_n} = \{g_1, \ldots, g_m\}$.
Then, we have
\begin{definition}\label{COER-BET} {\rm The function $P$ defined on $\mathcal{F}$ is coherent
if and only if, for every integer $n$, for every finite sub-family $\mathcal{F}_n$
$\subseteq \mathcal{F}$ and for every $s_1, \ldots, s_n$, one has:
$\min \; \mathcal{G}_{|\mathcal{H}_n} \; \leq 0 \leq \max \;
\mathcal{G}_{|\mathcal{H}_n}$. }\end{definition}
As shown by Definition \ref{COER-BET}, a probability assessment is coherent if and only if, in any finite combination of $n$ bets, it may not happen that the values $g_1, \ldots, g_m$ are all positive, or all negative ({\em no Dutch Book}). \\
Given any integer $n$ we set $J_n=\{1,2,\ldots,n\}$; for  each $h\in J_m$ with the constituent $C_h$  we associate a point
$Q_h = (q_{h1}, \ldots, q_{hn})$, where $q_{hj} = 1$, or 0, or $p_j$, according to whether $C_h \subseteq E_jH_j$, or $C_h \subseteq E_j^cH_j$, or $C_h \subseteq H_j^c$.
Denoting by $\mathcal{I}$ the convex hull of  $Q_1, \ldots, Q_m$,
based on the penalty criterion, it can be proved
(\cite[Thm. 4.4]{Gili90}, see also \cite{Gili92,GiSa11a})

\begin{theorem}\label{CNES}{\rm
The function $P$ is coherent if and only if, for every finite subfamily $ \mathcal{F}_n
\subseteq \mathcal{F}$, one has  $\mathcal{P}_n \in \mathcal{I}$. } \end{theorem}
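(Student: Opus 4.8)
The plan is to argue directly from the betting-scheme definition (Definition \ref{COER-BET}) and reduce it, via a separation argument, to membership of the assessment point in the convex hull $\mathcal{I}$. Fix a finite subfamily $\mathcal{F}_n=\{E_1|H_1,\dots,E_n|H_n\}$, its restricted assessment $\mathcal{P}_n=(p_1,\dots,p_n)$, and an arbitrary vector of stakes $s=(s_1,\dots,s_n)$. The first step is the key bookkeeping identity: for each constituent $C_h$ contained in $\mathcal{H}_n$ (so $h\in J_m$) and each index $j$, $C_h$ is included in exactly one of $E_jH_j$, $E_j^cH_j$, $H_j^c$, so the corresponding summand of $\mathcal{G}=\sum_{j}s_jH_j(E_j-p_j)$ evaluated on $C_h$ equals $s_j(1-p_j)$, $-s_jp_j$, or $0$, i.e.\ exactly $s_j(q_{hj}-p_j)$. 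Hence $g_h=\sum_{j=1}^n s_j(q_{hj}-p_j)=s\cdot(Q_h-\mathcal{P}_n)$: the gain on $C_h$ is the inner product of the stake vector with the displacement from $\mathcal{P}_n$ to $Q_h$, and $\mathcal{G}_{|\mathcal{H}_n}=\{g_1,\dots,g_m\}$.

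For the ``if'' direction, suppose $\mathcal{P}_n\in\mathcal{I}$, say $\mathcal{P}_n=\sum_{h=1}^m\lambda_hQ_h$ with $\lambda_h\ge0$ and $\sum_h\lambda_h=1$. Then $\sum_h\lambda_h g_h=s\cdot\big(\sum_h\lambda_hQ_h-\mathcal{P}_n\big)=0$, so $0$ is a convex combination of $g_1,\dots,g_m$ and therefore $\min\mathcal{G}_{|\mathcal{H}_n}\le0\le\max\mathcal{G}_{|\mathcal{H}_n}$. Since $\mathcal{F}_n$ and $s$ were arbitrary, Definition \ref{COER-BET} gives coherence of $P$.

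For the ``only if'' direction I argue by contraposition: assume there is a finite subfamily with $\mathcal{P}_n\notin\mathcal{I}$. The set $\mathcal{I}$ is the convex hull of the finitely many points $Q_1,\dots,Q_m$, hence compact and convex, and $\{\mathcal{P}_n\}$ is disjoint from it, so the separating hyperplane theorem yields $s\in\mathcal{R}^n$ and $\alpha\in\mathcal{R}$ with $s\cdot Q_h<\alpha<s\cdot\mathcal{P}_n$ for every $h\in J_m$ (strictness being available precisely because $\mathcal{I}$ is compact). Then $g_h=s\cdot(Q_h-\mathcal{P}_n)<\alpha-s\cdot\mathcal{P}_n<0$ for all $h$, so $\max\mathcal{G}_{|\mathcal{H}_n}<0$; choosing these stakes realizes a Dutch book and contradicts coherence. (If one prefers the ``all positive'' form of a Dutch book, replace $s$ by $-s$.) Matching the quantifier ``for every finite subfamily'' on both sides then completes the equivalence.

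The only genuinely nontrivial ingredient is this geometric separation step, and the point to get right is its strictness: this is exactly where finiteness of the family of constituents (hence compactness of $\mathcal{I}$) is used, and it is what turns ``$\mathcal{P}_n$ outside $\mathcal{I}$'' into a gain vector that is \emph{strictly} one-signed rather than merely one-signed after a limit. As an alternative, one can route the proof through the penalty (Brier-score) criterion of \cite{Gili90}: the penalty incurred at $C_h$ under an alternative assessment $\mathcal{P}_n^{*}$ is $\|Q_h-\mathcal{P}_n^{*}\|^2$, and a short computation shows that the orthogonal projection of $\mathcal{P}_n$ onto $\mathcal{I}$ strictly lowers the penalty at every constituent iff $\mathcal{P}_n\notin\mathcal{I}$, while no alternative assessment can do so when $\mathcal{P}_n\in\mathcal{I}$; this is the same dichotomy viewed from the scoring-rule side. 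I expect the separating-hyperplane route to be the shorter one to write out in full.
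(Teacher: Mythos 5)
Your argument is correct, but it is worth noting that the paper does not actually prove Theorem \ref{CNES}: it is stated as a known result, proved in \cite{Gili90} \emph{via the penalty criterion}, and the link to the betting-scheme Definition \ref{COER-BET} is then dispatched in one line by appealing to a theorem of the alternative from \cite{Gale60}, which identifies solvability of the system $\Sigma$ (equivalently, $\mathcal{P}_n\in\mathcal{I}$) with the condition $\min\mathcal{G}_{|\mathcal{H}_n}\le 0\le\max\mathcal{G}_{|\mathcal{H}_n}$. What you have written is essentially a self-contained proof of exactly that instance of the alternative theorem: your identity $g_h=s\cdot(Q_h-\mathcal{P}_n)$ is the correct bridge between the gain values and the geometry, the convex-combination step $\sum_h\lambda_h g_h=0$ handles one direction, and strict separation of the point $\mathcal{P}_n$ from the compact convex hull $\mathcal{I}$ handles the other, producing a stake vector with all gains strictly negative. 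This route buys a fully elementary, citation-free proof and makes visible where finiteness of the constituent set is used; the paper's route (penalty criterion plus Gale) instead emphasizes that the same convex-hull condition characterizes coherence under two operationally different definitions (no Dutch book versus non-dominance of the quadratic penalty), which is the content of your closing remark about projecting $\mathcal{P}_n$ onto $\mathcal{I}$. The only caution is bookkeeping, which you in fact get right: the quantifier ``for every finite subfamily'' must appear on both sides of the equivalence, since $\mathcal{P}_n\in\mathcal{I}$ for one family does not by itself control sub- or super-families (the coordinates of the $Q_h$ depend on the assessed values $p_j$, so the hulls for different subfamilies are not projections of one another in any trivial sense).
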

The condition $\mathcal{P}_n \in \mathcal{I}$ is equivalent to the solvability of the
following system ($\Sigma$) in the unknowns $\lambda_1, \ldots,
\lambda_m$
\[
\Sigma: \hspace{1 cm}
\sum_{h=1}^m q_{hj} \lambda_h = p_j \; , \; \; j\in J_n \,
;\;\; \sum_{h=1}^m \lambda_h = 1 \; ;\; \; \lambda_h \geq 0 \, ,
\; h \in J_m\,.
\]
We say that system $\Sigma$ is  associated with the pair $(\mathcal{F}_n,\mathcal{P}_n)$.
Notice that, by a suitable alternative theorem (\cite[Thm 2.9]{Gale60}), solvability of system $\Sigma$ amounts to  condition $\min  \mathcal{G}_{|\mathcal{H}_n} \leq 0 \leq \max
\mathcal{G}_{|\mathcal{H}_n}$. Hence, Theorem \ref{CNES} provides a geometrical meaning for the notion of coherence given in Definition~\ref{COER-BET}.
\subsection{Coherence Checking}
Given the assessment  $\mathcal{P}_n$ on $\mathcal{F}_n$, let $S$ be the set of solutions $\Lambda = (\lambda_1, \ldots,
\lambda_m)$ of the system $\Sigma$. Then, assuming $S \neq \emptyset$, define
\[\begin{array}{l}
\Phi_j(\Lambda) = \Phi_j(\lambda_1, \ldots, \lambda_m) = \sum_{r :
C_r \subseteq H_j} \lambda_r \; , \; \; \; j \in J_n \,;\; \Lambda \in S \,;
\\
M_j  =  \max_{\Lambda \in S } \; \Phi_j(\Lambda) \; , \; \; \; j\in J_n\,;
\;\;\; I_0  =  \{ j \, : \, M_j=0 \} \,.
\end{array}\]
We observe that, assuming $\P_n$ coherent, each solution $\Lambda=(\lambda_1, \ldots,
\lambda_m)$ of system $\Sigma$ is a coherent extension of the assessment $\mathcal{P}_n$ on $\mathcal{F}_n$ to the family $\{C_1|\H_n,\, \ldots,\,
C_m|\H_n\}$. Then, by the additive property, the quantity $\Phi_j(\Lambda)$ is the conditional probability $P(H_j|\H_n)$ and the quantity $M_j$ is the upper probability $P^*(H_j|\H_n)$ over all the solutions $\Lambda$ of system $\Sigma$.
 Of course, $j \in I_0$ if and only if $P^*(H_j|\H_n)=0$. Notice that $I_0 \subset J_n=\{1, \ldots,
 n\}$. We denote by $(\mathcal{F}_0, \mathcal{P}_0)$ the pair associated with $I_0$.
Given the pair $(\mathcal{F}_n,\mathcal{P}_n)$ and a subset $J \subset J_n$, we denote by $(\mathcal{F}_J, \mathcal{P}_J)$ the pair associated with
$J$ and by $\Sigma_J$ the corresponding system.
We observe that $\Sigma_J$ is solvable if and only if $\mathcal{P}_J  \in \mathcal{I}_J$,
where $\mathcal{I}_J$ is the convex hull associated with the pair $( \mathcal{F}_J,
\mathcal{P}_J)$. Then, we have  (\cite[Thm 3.2]{Gili93}; see also \cite{BiGS03,Gili95})
\begin{theorem}\label{GILIO-93}{\rm
Given a probability assessment $\mathcal{P}_n$ on the family $\mathcal{F}_n$, if
the system $\Sigma$ associated with $(\mathcal{F}_n,\mathcal{P}_n)$ is solvable, then for every $J\subset J_n$, such that $J\setminus I_0\neq \emptyset$, the system $\Sigma_J$ associated with $(\mathcal{F}_J,\mathcal{P}_J)$ is solvable too.}
\end{theorem}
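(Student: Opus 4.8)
The plan is to argue geometrically, via Theorem~\ref{CNES}, rather than to manipulate the betting systems directly. By that theorem (and the observation recorded just before the present statement) it suffices to show that $\mathcal{P}_J\in\mathcal{I}_J$, where $\mathcal{I}_J$ is the convex hull of the points $Q'_1,\dots,Q'_{m'}$ attached to the constituents $C'_1,\dots,C'_{m'}$ of $(\mathcal{F}_J,\mathcal{P}_J)$ contained in $\mathcal{H}_J=\bigvee_{j\in J}H_j$. So I would: (i) choose a good solution of $\Sigma$; (ii) match the constituents of $(\mathcal{F}_n,\mathcal{P}_n)$ with those of $(\mathcal{F}_J,\mathcal{P}_J)$; (iii) deduce a convex representation of $\mathcal{P}_J$ over the $Q'_k$'s. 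For (i): since $J\setminus I_0\neq\emptyset$, fix $j^*\in J\setminus I_0$; then $M_{j^*}>0$, so there is $\Lambda^*=(\lambda_1^*,\dots,\lambda_m^*)\in S$ with $\Phi_{j^*}(\Lambda^*)=\sum_{r:\,C_r\subseteq H_{j^*}}\lambda_r^*>0$. This is precisely the place where the hypothesis $J\setminus I_0\neq\emptyset$ is used.

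For (ii) — which I expect to be the only genuine work in the proof — note that $J\subseteq J_n$ gives $\mathcal{H}_J\subseteq\mathcal{H}_n$, so every constituent $C_h$ with $h\in J_m$ falls into exactly one of two cases: either $C_h\subseteq\mathcal{H}_J$, and then $C_h$ lies inside a unique constituent $C'_{k(h)}$ of $(\mathcal{F}_J,\mathcal{P}_J)$ and, checking coordinate by coordinate over $j\in J$, the projection $Q_h^J$ of $Q_h$ onto the coordinates indexed by $J$ equals the vertex $Q'_{k(h)}$; or $C_h\subseteq H_j^c$ for every $j\in J$, i.e. $C_h\subseteq\mathcal{H}_J^c$, and then $q_{hj}=p_j$ for all $j\in J$, so $Q_h^J=\mathcal{P}_J$ (here the self-referential feature of the representation — the third value of a conditional event is its own probability — is what makes things close up). Setting $\alpha=\sum_{h:\,C_h\subseteq\mathcal{H}_J}\lambda_h^*$, exclusivity and exhaustiveness of the two cases over $J_m$ give $\sum_{h:\,C_h\subseteq\mathcal{H}_J^c}\lambda_h^*=1-\alpha$, while $\{h:C_h\subseteq H_{j^*}\}\subseteq\{h:C_h\subseteq\mathcal{H}_J\}$ gives $\alpha\geq\Phi_{j^*}(\Lambda^*)>0$, which is the one quantitative fact we need.

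For (iii), project the convex identity $\mathcal{P}_n=\sum_{h\in J_m}\lambda_h^*\,Q_h$ (valid since $\Lambda^*\in S$) onto the coordinates indexed by $J$ and split the sum according to the two cases; this yields $\mathcal{P}_J=\sum_{h:\,C_h\subseteq\mathcal{H}_J}\lambda_h^*\,Q'_{k(h)}+(1-\alpha)\,\mathcal{P}_J$, that is
\[
\alpha\,\mathcal{P}_J=\sum_{h:\,C_h\subseteq\mathcal{H}_J}\lambda_h^*\,Q'_{k(h)}.
\]
Dividing by $\alpha>0$ exhibits $\mathcal{P}_J$ as a convex combination of $Q'_1,\dots,Q'_{m'}$ (the coefficients $\lambda_h^*/\alpha$ are $\geq 0$ and sum to $1$), hence $\mathcal{P}_J\in\mathcal{I}_J$ and $\Sigma_J$ is solvable. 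Equivalently, if an explicit witness is wanted, the numbers $\mu_k=\frac{1}{\alpha}\sum_{h:\,C_h\subseteq C'_k}\lambda_h^*$ ($k\in J_{m'}$) are checked directly to solve $\Sigma_J$. The only subtlety to watch is the bookkeeping in step (ii): that the projected vertex $Q_h^J$ is \emph{exactly} $Q'_{k(h)}$ in the first case and \emph{exactly} $\mathcal{P}_J$ in the second — given this, everything else is a one-line convexity computation.
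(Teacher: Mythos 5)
Your proof is correct. Note that the paper does not actually prove Theorem~\ref{GILIO-93} --- it is imported from \cite{Gili93} with only a citation --- so there is no internal proof to compare against; but your projection-and-renormalization argument is precisely the standard one for this lemma, and every step checks out: the dichotomy on the constituents ($C_h\subseteq\mathcal{H}_J$ versus $C_h\subseteq\mathcal{H}_J^c$ for $h\in J_m$) is exhaustive and exclusive, the identification $Q_h^J=Q'_{k(h)}$ in the first case and $Q_h^J=\mathcal{P}_J$ in the second is where the convention ``third value $=p_j$'' does its work, and the hypothesis $J\setminus I_0\neq\emptyset$ enters exactly where you say it does, via $\alpha\geq\Phi_{j^*}(\Lambda^*)>0$, which licenses the division that turns $\alpha\,\mathcal{P}_J=\sum_{h:\,C_h\subseteq\mathcal{H}_J}\lambda_h^*\,Q'_{k(h)}$ into a convex representation of $\mathcal{P}_J$ over the $Q'_k$'s.
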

The previous result says that the condition $\P_n \in \I$ implies $\P_J \in \I_J$ when $J\setminus I_0\neq \emptyset$. We observe that, if $\P_n \in \I$, then for every nonempty subset $J$ of $J_n\setminus I_0$ it holds that $J\setminus I_0=J \neq \emptyset$; hence, by Theorem \ref{CNES}, the subassessment $\P_{J_n\setminus I_0}$
on the subfamily $\F_{J_n\setminus I_0}$ is coherent. In particular, when $I_0$ is empty, coherence of $\P_n$ amounts to solvability of system $(\Sigma)$, that is to condition $\P_n \in \I$. When $I_0$ is not empty, coherence of $\P_n$ amounts to the validity of both conditions $\P_n \in \I$ and $\P_0$ coherent, as shown  below (\cite[Thm 3.3]{Gili93}).
\begin{theorem}\label{COER-P0}{\rm The assessment $\mathcal{P}_n$ on $\mathcal{F}_n$ is coherent if and only if the following conditions are satisfied:
(i) $\mathcal{P}_n \in \mathcal{I}$; (ii) if $I_0 \neq \emptyset$, then $\mathcal{P}_0$ is coherent.
}\end{theorem}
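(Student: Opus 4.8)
The plan is to derive the equivalence from the geometric characterisation of coherence in Theorem~\ref{CNES} together with the reduction result in Theorem~\ref{GILIO-93}. Taking the ambient family in Theorem~\ref{CNES} to be $\mathcal{F}_n$ itself, coherence of $\mathcal{P}_n$ is equivalent to the condition $\mathcal{P}_J\in\mathcal{I}_J$ --- equivalently, solvability of the system $\Sigma_J$ --- for \emph{every} $J\subseteq J_n$. So I would prove both implications by comparing this full list of conditions with conditions (i)--(ii), organising the subsets $J$ according to whether $J\setminus I_0=\emptyset$ or not. Here it is worth recording at the outset that $\mathcal{I}_J$ and $\Sigma_J$ depend only on the pair $(\mathcal{F}_J,\mathcal{P}_J)$, so it is harmless to view $\mathcal{F}_J$ either inside $\mathcal{F}_n$ or inside $\mathcal{F}_0$.

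For necessity, assume $\mathcal{P}_n$ is coherent. Then $\mathcal{P}_n\in\mathcal{I}$ is immediate from Theorem~\ref{CNES}, giving (i). For (ii), observe that when $I_0\neq\emptyset$ the pair $(\mathcal{F}_0,\mathcal{P}_0)$ is obtained from $(\mathcal{F}_n,\mathcal{P}_n)$ by restriction to the indices in $I_0$; since every finite sub-family of $\mathcal{F}_0$ is a finite sub-family of $\mathcal{F}_n$, Definition~\ref{COER-BET} applied to $\mathcal{P}_n$ yields the no-Dutch-Book condition for all such sub-families, i.e.\ $\mathcal{P}_0$ is coherent.

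For sufficiency, assume (i) and (ii); I must show $\Sigma_J$ is solvable for every $J\subseteq J_n$. If $J=\emptyset$ this is trivial. If $J\setminus I_0\neq\emptyset$ and $J=J_n$, then $\Sigma_J=\Sigma$ is solvable by~(i); if $J\setminus I_0\neq\emptyset$ and $J\subsetneq J_n$, then, since $\Sigma$ is solvable by~(i), Theorem~\ref{GILIO-93} gives solvability of $\Sigma_J$. If instead $J\subseteq I_0$ (so in particular $I_0\neq\emptyset$), then $\mathcal{F}_J$ is a sub-family of $\mathcal{F}_0$ with $\mathcal{P}_J$ the corresponding restriction of $\mathcal{P}_0$, and condition~(ii) together with Theorem~\ref{CNES} applied to $(\mathcal{F}_0,\mathcal{P}_0)$ gives $\mathcal{P}_J\in\mathcal{I}_J$, i.e.\ $\Sigma_J$ is solvable. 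These cases exhaust all $J\subseteq J_n$, so by Theorem~\ref{CNES} (with ambient family $\mathcal{F}_n$) the assessment $\mathcal{P}_n$ is coherent.

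I expect the only real subtlety --- rather than a genuine obstacle --- to be the bookkeeping around the case $J=J_n$, which is not covered by Theorem~\ref{GILIO-93} as stated (that result speaks of proper subsets) and must be handled directly by~(i), together with the remark that a sub-family's convex hull $\mathcal{I}_J$ is an intrinsic object, so that invoking Theorem~\ref{CNES} on the sub-model $(\mathcal{F}_0,\mathcal{P}_0)$ legitimately produces the memberships $\mathcal{P}_J\in\mathcal{I}_J$ needed for the original model. Everything else is a direct application of the quoted results.
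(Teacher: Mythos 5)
Your argument is correct: the case split on whether $J\subseteq I_0$ or $J\setminus I_0\neq\emptyset$ (with $J=J_n$ handled directly by (i)), combined with Theorem~\ref{CNES}, Theorem~\ref{GILIO-93}, and the observation that $\mathcal{I}_J$ is intrinsic to $(\mathcal{F}_J,\mathcal{P}_J)$, does establish both directions. The paper itself gives no proof of this theorem --- it is quoted from \cite[Thm 3.3]{Gili93} --- but the reduction you carry out is exactly the one sketched in the paragraph preceding the statement, so your proposal fills in the intended argument rather than departing from it.
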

\subsection{Coherent conditional prevision assessments}\label{SEC-COER-CPA}

Given an event $H \neq \emptyset$ and a finite random quantity (r.q.) $X$, we denote by $X_{|H}$, the set of possible values of $X$ restricted to $H$ and we set $X_{|H} = \{x_1, x_2, \ldots, x_r\}$. In the setting of coherence, agreeing to the betting metaphor the prevision of $''X$ {\em conditional on} $H''$ (also named $''X$ {\em given} $H''$), $\pr(X|H)$, is defined as the amount $\mu$ you agree to pay, by knowing that you will receive the amount $X$ if $H$ is true, or you will receive back the  amount $\mu$ if $H$ is false (bet called off). 
In what follows we define the conditional random quantity (c.r.q.) $''X$ {\em given} $H''$, denoted by $X|H$, as the amount that you receive when you stipulate a bet on ‘$X$ conditional on $H$’.
Then, it holds that $X|H = XH + \mu H^c$, where $\mu = \pr(X|H)$, so that we can look at the c.r.q. $X|H$ as the {\em unconditional} r.q. $XH + \mu H^c$. We observe that, if $\mu \notin X_{|H}$, then $X|H \in \{x_1, x_2, \ldots, x_r, \mu\}$. Moreover, denoting by $A_i$ the event $(X=x_i),\, i\in J_r$, the family $\{A_1H,\ldots,A_rH,H^c\}$ is a partition of $\Omega$ and we have
\[
X|H = XH + \mu H^c = x_1A_1H + \cdots + x_rA_rH + \mu H^c \,.
\]
In particular, when $X$ is an event $A$, the prevision of $X|H$ is the probability of $A|H$ and, if you assess $P(A|H) = p$, then for the indicator of $A|H$, denoted by the same symbol, we have $A|H = AH + pH^c \in \{1,0,p\}$.
We observe that the choice of $p$ as the value of $A|H$ when $H$ is false has been also considered in some previous works  (\cite{CoSc02,Gili90,Jeff91,Lad96,McGe89,StJe94,vanF76}). One peculiarity of our coherence-based approach is that we avoid ad-hoc (and may be inconsistent) evaluations like $P(A|H)=1$ when $P(H)=0$; in this way, some basic probabilistic formulas, such as $P(H^c|H)=0$ and $P(A|H) + P(A^c|H)=1$, are satisfied in all cases included that one where $P(H)=0$.\\
Given a prevision function $\pr$ defined on an arbitrary family $\K$ of finite
conditional random quantities, let $\F_n = \{X_i|H_i, \, i
\in J_n\}$ be a finite subfamily of $\K$ and $\M_n$ the vector
$(\mu_i, \, i \in J_n)$, where $\mu_i = \pr(X_i|H_i)$ is the
assessed prevision for the conditional random quantity $X_i|H_i$.
With the pair $(\F_n,\M_n)$ we associate the random gain $\G =
\sum_{i \in J_n}s_iH_i(X_i - \mu_i)$. Then, using the {\em betting scheme} of de Finetti, we
have
\begin{definition}\label{COER-RQ}{\rm
The function $\pr$ defined on $\K$ is coherent if and only if, $\forall n
\geq 1$,  $\forall \, \F_n \subseteq \K,\, \forall \, s_1, \ldots,
s_n \in \mathbb{R}$, it holds that: $\min \; \mathcal{G}_{|\mathcal{H}_n} \; \leq 0 \leq \max \;
\mathcal{G}_{|\mathcal{H}_n}$. }\end{definition}
\begin{remark}\label{X-CONSTANT}{\rm
We observe that, for $\K = \{X|H\}$, with $\pr(X|H)=\mu$ and $X_{|H} = \{x_1,\ldots,x_r\}$, by the previous definition we have that $\mu$ is coherent if and only if $min \, X_{|H} \leq \mu \leq max \, X_{|H}$. In particular, if $X_{|H}=\{c\}$, then $X|H = cH + \mu H^c$ and $\mu$ is coherent if and only if $\mu = c$. Of course, for $X=H$ (resp. $X=H^c$) it holds that $\mu=1$ (resp. $\mu=0$) and hence $H|H = 1,\, H^c|H = 0$.
}\end{remark}
Given a family $\F_n = \{X_1|H_1,\ldots,X_n|H_n\}$, for each $i \in J_n$ we denote by $\{x_{i1}, \ldots,x_{ir_i}\}$ the set of possible values for the restriction of $X_i$ to $H_i$; then, for each $i \in J_n$ and $j = 1, \ldots, r_i$, we set $A_{ij} = (X_i = x_{ij})$. Of course, for each $i \in J_n$, the family $\{H_i^c, A_{ij}H_i \,,\; j = 1, \ldots, r_i\}$ is a partition of the sure event $\Omega$. Then,
the constituents generated by the family $\F_n$ are (the
elements of the partition of $\Omega$) obtained by expanding the
expression $\bigwedge_{i \in J_n}(A_{i1}H_i \vee \cdots \vee A_{ir_i}H_i \vee
H_i^c)$. We set $C_0 = H_1^c \cdots H_n^c$ (it may be $C_0 = \emptyset$);
moreover, we denote by $C_1, \ldots, C_m$ the constituents
contained in $\H_n = H_1 \vee \cdots \vee H_n$. Hence
$\bigwedge_{i \in J_n}(A_{i1}H_i \vee \cdots \vee A_{ir_i}H_i \vee
H_i^c) = \bigvee_{h = 0}^m C_h$.
With each $C_h,\, h \in J_ m$, we associate a vector
$Q_h=(q_{h1},\ldots,q_{hn})$, where
\begin{equation}\label{VECTOR-Q}
q_{hi}=\left\{\begin{array}{ll} x_{i1} \,, &  C_h \subseteq
A_{i1}H_i \,, \\
..... & .................. \\
x_{ir_i} \,, & C_h \subseteq
A_{ir_i}H_i \,, \\
\mu_i \,, &  C_h \subseteq H_i^c \,.
\end{array}\right.
\end{equation}
In more explicit terms, for each $j \in
\{1,\ldots,r_i\}$ the condition $C_h \subseteq A_{ij}H_i$ amounts to
$C_h \subseteq A_{i1}^c \cdots A_{i,j-1}^cA_{ij}A_{i,j+1}^c \cdots
A_{ir}^cA_{ir_i}^cH_i$. We observe that the vector $Q_h$ is the value of the random vector $(X_1|H_1,\ldots,X_n|H_n)$ when $C_h$ is true; moreover, if $C_0$ is true, then the value of such a random vector is $\M_n = (\mu_1,\ldots,\mu_n)$. Denoting by $\I_n$ the convex hull of $Q_1, \ldots, Q_m$, the condition  $\M_n\in \I_n$ amounts to the existence of a vector $(\lambda_1,\ldots,\lambda_m)$ such that:
$ \sum_{h \in J_ m} \lambda_h Q_h = \M_n \,,\; \sum_{h \in J_ m} \lambda_h
= 1 \,,\; \lambda_h \geq 0 \,,\; \forall \, h$; in other words, $\M_n\in \I_n$ is equivalent to solvability of the following system $\Sigma$ associated with the pair $(\F_n,\M_n)$, in the
nonnegative unknowns $\lambda_1,\ldots, \lambda_m$,
\begin{equation}\label{SYST-SIGMA}
\Sigma: \hspace{0.5cm}\sum_{h \in J_ m} \lambda_h q_{hi} =
\mu_i \,,\; i \in J_n \,; \; \sum_{h \in J_ m} \lambda_h = 1 \,;\;
\lambda_h \geq 0 \,,\;  \, h\in J_m \,.
\end{equation}
Given a subset $J \subseteq J_n$, we set
$\F_J = \{X_i|H_i \,,\, i \in J\} \,,\;\; \M_J = (\mu_i \,,\, i
\in J) \,;
$ then, we denote by $\Sigma_J$, where $\Sigma_{J_n} = \Sigma$, the system like (\ref{SYST-SIGMA}) associated with the pair $(\F_J,\M_J)$. Then, it can be proved the following (\cite{BiGS08})
\begin{theorem}\label{SYSTEM-SOLV}{ \rm [{\em General characterization of coherence}].
Given a family of $n$ conditional random quantities $\F_n =
\{X_1|H_1,\ldots,X_n|H_n\}$ and a vector $\M_n =
(\mu_1,\ldots,\mu_n)$, the conditional prevision assessment
$\pr(X_1|H_1) = \mu_1, \ldots$, $\pr(X_n|H_n) =
\mu_n$ is coherent if and only if, for every subset $J \subseteq J_n$,
defining $\F_J = \{X_i|H_i \,,\, i \in J\}$, $\M_J = (\mu_i \,,\,
i \in J)$, the system $\Sigma_J$ associated with the pair
$(\F_J,\M_J)$ is solvable. }\end{theorem}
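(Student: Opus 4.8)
The plan is to derive the statement from two observations which, together, leave essentially nothing to prove beyond the convex geometry already set up in the preliminary section. \emph{First}, for each fixed index set $J\subseteq J_n$, I would show that solvability of the linear system $\Sigma_J$ is equivalent to the impossibility of a Dutch book built from the subfamily $\F_J$ alone and betting on its own conditioning event $\H_J$, i.e.\ to the condition that $\min \G_{|\H_J}\le 0\le\max \G_{|\H_J}$ for every choice of the coefficients $s_i$, $i\in J$. This is just the prevision-theoretic counterpart of the equivalence between $\P_n\in\I$ and $\min \G_{|\H_n}\le 0\le\max \G_{|\H_n}$ recalled after Theorem~\ref{CNES}, and it is proved in exactly the same way: writing $\M_J=(\mu_i)_{i\in J}$ and letting $\I_J$ be the convex hull of the points $Q_h$ attached by~(\ref{VECTOR-Q}) to the constituents $C_h$ of $\F_J$ contained in $\H_J$, solvability of $\Sigma_J$ is precisely the condition $\M_J\in\I_J$; moreover, with coefficients $(s_i)_{i\in J}$ the gain $\G=\sum_{i\in J}s_iH_i(X_i-\mu_i)$ equals $g_h=\sum_{i\in J}s_i(q_{hi}-\mu_i)$ on $C_h$, so that $\G_{|\H_J}$ is the set of these values $g_h$. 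If $\M_J\in\I_J$, then $\sum_{i\in J}s_i\mu_i$ is a convex combination of the numbers $\sum_{i\in J}s_iq_{hi}$, whence $\min_h g_h\le 0\le\max_h g_h$ for every choice of the $s_i$; conversely, if $\M_J\notin\I_J$, a standard separating-hyperplane argument applied to the point $\M_J$ and the compact polytope $\I_J$ yields coefficients $s_i$ with $\sum_{i\in J}s_iq_{hi}>\sum_{i\in J}s_i\mu_i$ for every $h$ (change the sign of all $s_i$ if necessary), hence $g_h>0$ for all $h$ and the coherence inequality fails. Equivalently one invokes the alternative theorem \cite[Thm 2.9]{Gale60} exactly as in the event case; nothing there uses the particular form of the entries $q_{hi}$, so the argument transfers verbatim.

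\emph{Second}, I would simply unwind Definition~\ref{COER-RQ}: read with $\K=\F_n$, it states that $\pr$ is coherent on $\F_n$ exactly when, for every finite subfamily --- that is, for every $J\subseteq J_n$ --- and every choice of betting coefficients on $J$, there is no Dutch book on $\H_J$. Combining this with the first observation applied to each $J$ gives at once that coherence of $\pr$ on $\F_n$ is equivalent to solvability of $\Sigma_J$ for every $J\subseteq J_n$, which is the assertion (the empty subset may be discarded, the empty family being trivially coherent).

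So the argument is short, and the single point that really needs care --- and the reason the criterion must range over \emph{all} subsets $J$ rather than over $\Sigma=\Sigma_{J_n}$ alone --- is that passing from $\F_n$ to a subfamily $\F_J$ both coarsens the constituents and replaces the conditioning event $\H_n$ by the smaller one $\H_J$, so that a Dutch book relative to $\H_J$ is in general invisible if one forbids only Dutch books relative to $\H_n$; indeed, Theorem~\ref{GILIO-93} guarantees that solvability of $\Sigma$ propagates to $\Sigma_J$ only when $J\setminus I_0\neq\emptyset$, the exceptional subsets $J\subseteq I_0$ being precisely what Theorem~\ref{COER-P0} handles recursively. A more economical route, if one wished, would mirror Theorems~\ref{GILIO-93} and~\ref{COER-P0} directly --- establish the prevision version of Theorem~\ref{GILIO-93} and then induct on the cardinality of $\F_n$ through the reduced pair $(\F_0,\M_0)$ associated with $I_0$ --- but for the statement as given the direct argument above is the cleanest, with the separating-hyperplane (equivalently, alternative-theorem) step as its only technical ingredient, and that poses no real difficulty.
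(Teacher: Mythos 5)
Your argument is correct: the paper itself gives no proof of Theorem~\ref{SYSTEM-SOLV} (it is quoted from \cite{BiGS08}), but your derivation is the standard one and is exactly the mechanism the paper sets up --- namely, the equivalence, via convexity and the alternative theorem \cite[Thm 2.9]{Gale60} already invoked after Theorem~\ref{CNES}, between solvability of $\Sigma_J$ (i.e.\ $\M_J\in\I_J$) and the no-Dutch-book condition $\min\G_{|\H_J}\le 0\le\max\G_{|\H_J}$ for the subfamily $\F_J$, combined with the observation that Definition~\ref{COER-RQ} applied to $\K=\F_n$ quantifies precisely over all subfamilies $\F_J$, each with its own conditioning event $\H_J$. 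Your closing remark correctly identifies why the quantification over all $J$ (rather than $J_n$ alone) is essential, in line with Theorems~\ref{GILIO-93} and~\ref{CNES-PREV-I_0-INT}.
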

A characterization of coherence of conditional prevision assessments by non dominance with respect to proper scoring rules has been given in \cite{BiGS12}. \\
Given the assessment $\M_n =(\mu_1,\ldots,\mu_n)$ on  $\F_n =
\{X_1|H_1,\ldots,X_n|H_n\}$, let $S$ be the set of solutions $\Lambda = (\lambda_1, \ldots,\lambda_m)$ of the system $\Sigma$ defined in  (\ref{SYST-SIGMA}).  For any given event $A$ and for any vector $\Lambda = (\lambda_1, \ldots,\lambda_m)$  we simply denote by $\sum_{A}\lambda_h$ the quantity $\sum_{h:C_h\subseteq A}\lambda_h$. Then, assuming  the system $\Sigma$  solvable, i.e. $S \neq \emptyset$, we define
\[
\Gamma_0 = \{i : \; max_{\Lambda \in S} \; \sum_{H_i}\lambda_h
> 0\} \,;\;\;
I_0 =J_n \setminus \Gamma_0= \{i : \; max_{\Lambda \in S} \; \sum_{H_i}\lambda_h
= 0\} \,;
\]
\[
\F_0 = \{X_i|H_i \,,\, i \in I_0\}\;;\; \M_0 = (\mu_i \,,\, i \in I_0)\;.
\]  Then, we have  (\cite[Thm 3]{BiGS08})
\begin{theorem}\label{CNES-PREV-I_0-INT}{\rm [{\em Operative characterization of coherence}]
A vector of  prevision assessment ${\M_n} = (\mu_1,\ldots,\mu_n)$ on
the family $\F_n = \{X_1|H_1,\ldots,X_n|H_n\}$ is coherent if
and only if the following conditions are satisfied: \\
(i) the system $\Sigma$ defined in (\ref{SYST-SIGMA}) is solvable ; (ii) if $I_0 \neq \emptyset$, then $\M_0$ is coherent. }
\end{theorem}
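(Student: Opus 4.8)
The plan is to deduce this from the general characterization of coherence (Theorem~\ref{SYSTEM-SOLV}), following the same pattern as the proof of Theorem~\ref{COER-P0} for conditional events but with conditional random quantities throughout and with the system $\Sigma_J$ of (\ref{SYST-SIGMA}) in place of its conditional-event analogue. \emph{Necessity} is then immediate: if $\M_n$ is coherent, by Theorem~\ref{SYSTEM-SOLV} the system $\Sigma_J$ is solvable for every $J\subseteq J_n$; taking $J=J_n$ gives (i), and, since $\F_0$ is the subfamily of $\F_n$ indexed by $I_0$, solvability of $\Sigma_J$ for all $J\subseteq I_0$ means, again by Theorem~\ref{SYSTEM-SOLV} applied to $\F_0$, exactly that $\M_0$ is coherent, which is (ii).

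For \emph{sufficiency} the crucial ingredient is the analogue for conditional random quantities of Theorem~\ref{GILIO-93}: if $\Sigma$ is solvable, then $\Sigma_J$ is solvable for every $J\subseteq J_n$ with $J\setminus I_0\neq\emptyset$. I would prove this by aggregating and renormalizing a suitably chosen solution of $\Sigma$. Fix $i^\ast\in J\setminus I_0$; by definition of $I_0$ there is a solution $\Lambda=(\lambda_1,\ldots,\lambda_m)$ of $\Sigma$ with $\sum_{H_{i^\ast}}\lambda_h>0$. Each constituent $C_h\subseteq\H_n$ lies in a unique element of the coarser partition generated by $\F_J$: those $C_h$ with $C_h\subseteq H_i$ for some $i\in J$ fall into the constituents $C^J_1,\ldots,C^J_{m_J}$ of $\F_J$ contained in $\H_J=\bigvee_{i\in J}H_i$, and for each $i\in J$ the corresponding $Q$-components agree with $q_{hi}$, while the remaining $C_h$ fall into $C^J_0=\bigwedge_{i\in J}H_i^c$, where every component with index in $J$ equals $\mu_i$ by (\ref{VECTOR-Q}). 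Setting $L=\sum_{h:\,C_h\subseteq C^J_0}\lambda_h$ one has $L\le 1-\sum_{H_{i^\ast}}\lambda_h<1$; defining $\lambda^J_k=\sum_{h:\,C_h\subseteq C^J_k}\lambda_h$ for $k\in\{1,\ldots,m_J\}$ and dividing by $1-L$, one checks, splitting $\sum_h\lambda_hq_{hi}=\mu_i$ according to whether $C_h\subseteq\H_J$ or $C_h\subseteq C^J_0$, that the resulting nonnegative vector solves $\Sigma_J$.

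Granted this auxiliary fact, sufficiency follows. Assume (i) and (ii) and let $J\subseteq J_n$. If $J\setminus I_0\neq\emptyset$, then $\Sigma_J$ is solvable by the auxiliary fact. If $J\subseteq I_0$ (the case $J=\emptyset$ being trivial), then $\F_J$ is a subfamily of $\F_0$ and, since the partition generated by $\F_J$ — hence the system $\Sigma_J$ — does not depend on the ambient family, coherence of $\M_0$ together with Theorem~\ref{SYSTEM-SOLV} gives solvability of $\Sigma_J$. Thus $\Sigma_J$ is solvable for every $J\subseteq J_n$, and Theorem~\ref{SYSTEM-SOLV} yields coherence of $\M_n$.

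I expect the main obstacle to be the auxiliary fact. Its delicate points are: verifying that, after aggregating over the constituents of $\F_J$ and renormalizing by $1-L$, the coordinates indexed by $J$ come back exactly to $\mu_i$ — which holds because the constituents absorbed into $C^J_0$ contribute precisely $\mu_i L$ to the $i$-th coordinate of $\sum_h\lambda_hq_{hi}$ — and invoking the definition of $I_0$ at the right place to guarantee $L<1$, which is what makes the renormalization legitimate. Everything else is bookkeeping about how the constituents generated by $\F_n$ refine those generated by $\F_J$.
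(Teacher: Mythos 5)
Your proof is correct, and it is essentially the approach the paper relies on: the paper does not prove this theorem itself but cites it from \cite{BiGS08}, and your argument --- necessity via the general characterization of Theorem~\ref{SYSTEM-SOLV}, sufficiency via a prevision analogue of Theorem~\ref{GILIO-93} obtained by aggregating the fine constituents into the coarse ones of $\F_J$ and renormalizing by $1-L$ with $L<1$ guaranteed by the choice of $i^\ast\notin I_0$ --- is exactly the standard route, mirroring the probability-case Theorems~\ref{GILIO-93} and~\ref{COER-P0} presented in the paper. I see no gaps.
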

\begin{remark} \label{Rem-Gamma0}{\rm
Notice that, if system (\ref{SYST-SIGMA}) is solvable, then it could be proved that the sub-assessment $\M_{\Gamma_0}$ on the subfamily $\F_{\Gamma_0}$ is coherent.}
\end{remark}
Based on Theorem \ref{CNES-PREV-I_0-INT} the following algorithm for coherence checking has been given in \cite[see Remark 2]{BiGS08}.

\begin{algorithm}\label{ALG-PREV-INT}{\rm
Let be given the triplet $(J_n, \F_n, \M_n)$. \\
1. Construct the system $(\ref{SYST-SIGMA})$ and check its
solvability; \\
2. If the system $(\ref{SYST-SIGMA})$ is not solvable then
$\M_n$ is not g-coherent and the procedure stops, otherwise
compute the set $I_0$; \\
3. If $I_0 = \emptyset$ then $\M_n$ is g-coherent and the
procedure stops, otherwise set $(J_n, \F_n, \M_n) = (I_0, \F_0,
\M_0)$ and repeat steps 1-3. }
\end{algorithm}
\section{Some results on conditional random quantities}
\label{SEC:CPT}
We first deepen some aspects on conditional random quantities; then, by also exploiting linearity of prevision, we give a simple proof of the general compound prevision theorem.
\begin{theorem}\label{PREL-SUM}{\rm Given any quantity $a$, any event $H \neq \emptyset$ and any random quantities $X$ and $Y$, we have
\begin{equation}\label{XY-SUM}
(i) \;\; (aX)|H = a(X|H) \,,\;\; (ii) \;\; X|H + Y|H = (X + Y)|H \,.
\end{equation}
}\end{theorem}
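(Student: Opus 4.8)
The plan is to use the representation of conditional random quantities established just before the statement, namely that for a c.r.q.\ $Z|H$ with $\pr(Z|H)=\nu$ one has the identity $Z|H = ZH + \nu H^c$ as an unconditional random quantity. This converts both sides of each claimed identity into ordinary (unconditional) random quantities, so that equality can be checked by elementary algebra, provided one knows the relevant previsions. The key input is the linearity of prevision: by Definition~\ref{COER-RQ} and Theorem~\ref{SYSTEM-SOLV}, a coherent conditional prevision assessment must satisfy $\pr(aX|H)=a\,\pr(X|H)$ and $\pr((X+Y)|H)=\pr(X|H)+\pr(Y|H)$, since otherwise one could exhibit a Dutch book on the combined bets (equivalently, the relevant system $\Sigma$ would force these relations among the $\mu_i$). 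I would state this linearity explicitly as the first step, citing Definition~\ref{COER-RQ}.

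For part $(i)$: set $\mu=\pr(X|H)$, so $X|H = XH + \mu H^c$. Then $a(X|H) = aXH + a\mu H^c$. On the other hand, $\pr(aX|H) = a\mu$ by linearity, so $(aX)|H = aXH + a\mu H^c$. The two expressions coincide, which proves $(i)$. (The degenerate case $a=0$ is consistent with Remark~\ref{X-CONSTANT}, since $0\cdot(X|H)$ has constant value $0$ and $\mu=0$.)

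For part $(ii)$: set $\mu_1=\pr(X|H)$, $\mu_2=\pr(Y|H)$, and $\mu=\pr((X+Y)|H)$. Then $X|H + Y|H = XH + \mu_1 H^c + YH + \mu_2 H^c = (X+Y)H + (\mu_1+\mu_2)H^c$. By linearity $\mu = \mu_1 + \mu_2$, hence $(X+Y)|H = (X+Y)H + \mu H^c = (X+Y)H + (\mu_1+\mu_2)H^c$, and the two sides agree.

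The only subtle point — and the step I would be most careful about — is justifying the linearity of prevision itself within this coherence framework, i.e.\ explaining why a coherent assessment on $\{X|H,\ Y|H,\ (X+Y)|H\}$ (resp.\ $\{X|H,\ (aX)|H\}$) must satisfy the additive (resp.\ homogeneous) relation among the assessed values. This follows from Definition~\ref{COER-RQ} by choosing the betting coefficients on the three conditional random quantities so that the combined random gain $\G$, restricted to $H$, reduces identically to a nonzero constant whenever the additive relation fails, contradicting $\min \G_{|\H_n}\le 0\le\max\G_{|\H_n}$; on $H^c$ all three bets are called off simultaneously because they share the same conditioning event $H$, so no further constituents intervene. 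Once this lemma is in hand, both identities $(i)$ and $(ii)$ are immediate from the representation $Z|H = ZH + \pr(Z|H)H^c$, with no further calculation required.
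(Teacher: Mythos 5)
Your proof is correct and follows essentially the same route as the paper: both reduce each side to the unconditional representation $Z|H = ZH + \pr(Z|H)H^c$ and then use coherence to force the homogeneity/additivity of the assessed previsions. The only difference is how that last step is justified --- you build the Dutch book directly from Definition~\ref{COER-RQ}, whereas the paper notes that $X|H + Y|H - (X+Y)|H = 0\cdot H + (\mu+\nu-\eta)H^c$ is a c.r.q.\ whose restriction to $H$ is identically $0$, so Remark~\ref{X-CONSTANT} forces $\mu+\nu-\eta=0$; the two justifications are equivalent, since Remark~\ref{X-CONSTANT} is itself an instance of the no-Dutch-book condition.
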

\begin{proof}
(i) We set $\pr(X|H) = \mu$, so that $\pr[(aX)|H] = a\mu$; then, $(aX)|H = aXH + a\mu H^c = a (X|H)$ and we can simply write $aX|H$. \\
(ii)We set $\pr(X|H) = \mu,\, \pr(Y|H) = \nu,\, \pr[(X+Y)|H] = \eta$, with $(\mu,\nu,\eta)$ coherent; then, we have
\[
X|H = XH + \mu H^c \,,\; Y|H = YH + \nu H^c \,,\; (X+Y)|H = XH + YH + \eta H^c \,.
\]
It follows
\[
X|H + Y|H - (X+Y)|H = 0 \cdot H + (\mu+\nu-\eta)H^c \,;
\]
hence, $X|H + Y|H - (X+Y)|H$ is a c.r.q. $Z|H$, with $Z_{|H} = \{0\}$.
Then, by Remark \ref{X-CONSTANT}, $\mu+\nu-\eta = 0$ and hence
$X|H + Y|H = (X+Y)|H$.
\end{proof}
By the same reasoning, it follows: $aX|H + bY|H = (aX+bY)|H$.
\begin{theorem}\label{EQ-CRQ}{\rm Given two c.r.q.'s $X|H, Y|K$, with $\pr(X|H)=\mu,\, \pr(Y|K)=\nu$ and with $(\mu,\nu)$ coherent, assume that $X|H = Y|K$ when the disjunction $H \vee K$ is true. Then $X|H = Y|K$.
}\end{theorem}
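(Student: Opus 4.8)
The plan is to use the representation $X|H = XH + \mu H^c$ and $Y|K = YH + \nu K^c$ established in Section \ref{SEC-COER-CPA}, together with Remark \ref{X-CONSTANT}, which handles the degenerate case of a conditional random quantity whose underlying quantity is constant. The key observation is that the hypothesis ``$X|H = Y|K$ when $H \vee K$ is true'' together with the structure of the two representations forces the difference $X|H - Y|K$ to be a conditional random quantity conditioned on $(H \vee K)^c = H^c K^c$, with constant value; Remark \ref{X-CONSTANT} then pins down the relevant previsions.

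More precisely, I would first form the difference $Z := X|H - Y|K = (XH + \mu H^c) - (YK + \nu K^c)$ and analyze it constituent by constituent relative to the partition generated by $H$ and $K$. On the constituent $H^cK^c$ (i.e.\ when $H \vee K$ is false) we have $Z = \mu - \nu$, a constant. On each of the constituents $HK$, $HK^c$, $H^cK$ (i.e.\ when $H \vee K$ is true), the hypothesis gives $X|H = Y|K$, so $Z = 0$ there. Hence $Z$ takes the value $0$ whenever $H\vee K$ is true and the value $\mu-\nu$ whenever $H \vee K$ is false; that is, $Z = 0\cdot(H\vee K) + (\mu-\nu)(H\vee K)^c$, which is exactly the conditional random quantity $W|(H^cK^c)$ with $W_{|H^cK^c} = \{0\}$ and assessed prevision $\mu - \nu$. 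By Remark \ref{X-CONSTANT} (the constant case), coherence of the induced assessment forces $\mu - \nu = 0$, and therefore $Z = 0$ identically, i.e.\ $X|H = Y|K$.

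The one point requiring care — and the step I expect to be the main obstacle — is the justification that $Z$ genuinely behaves as an (unconditional) random quantity with the stated two possible values, and that applying Remark \ref{X-CONSTANT} to it is legitimate. One must be sure that the previsions $\mu = \pr(X|H)$ and $\nu = \pr(Y|K)$ entering the representations are the coherent ones posited in the statement (so that $(\mu,\nu)$ coherent is available), and that passing to the linear combination $X|H - Y|K$ and then reading off its value on $H^cK^c$ is compatible with coherence — this is where the additivity result Theorem \ref{PREL-SUM}(ii), or rather its analogue for quantities conditioned on different events, implicitly underlies the manipulation. Once it is granted that $X|H - Y|K$ is a c.r.q.\ of the form $W \mid (H^c K^c)$ with $W$ constant equal to $0$, the conclusion $\mu = \nu$ is immediate from Remark \ref{X-CONSTANT}, and hence the two conditional random quantities coincide not merely on $H \vee K$ but everywhere.
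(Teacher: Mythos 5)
Your overall strategy --- form the difference $X|H - Y|K$, observe that on $H\vee K$ it vanishes by hypothesis while on $H^cK^c$ it equals $\mu-\nu$, and let coherence force $\mu=\nu$ --- captures the right phenomenon, but the route you propose has a gap exactly where you anticipate one. To apply Remark \ref{X-CONSTANT} you must first know that $X|H - Y|K$ \emph{is} a conditional random quantity conditioned on $H\vee K$ whose prevision (i.e.\ whose value on the complement of the conditioning event) is $\mu-\nu$. For two quantities conditioned on the \emph{same} event this is immediate from the representation, which is why the proof of Theorem \ref{PREL-SUM}(ii) goes through; for different conditioning events it is precisely the content of formula (\ref{SUM-GEN}), which the paper establishes only later, via (\ref{XgHvK}) and Theorem \ref{COMP-PREV-TH} --- and the proof of Theorem \ref{COMP-PREV-TH} itself invokes Theorem \ref{EQ-CRQ}. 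So the ``analogue for quantities conditioned on different events'' that you say underlies the manipulation is not available at this point without circularity. There is also a slip in the identification: the conditioning event must be $H\vee K$, not $H^cK^c$. A c.r.q.\ $W|E$ takes its genuine values on $E$ and its prevision on $E^c$; your $Z$ equals $0$ on $H\vee K$ and $\mu-\nu$ on $H^cK^c$, so it should be read as $W|(H\vee K)$ with $W_{|H\vee K}=\{0\}$ and prevision $\mu-\nu$, whereas $W|(H^cK^c)$ with $W_{|H^cK^c}=\{0\}$ and prevision $\mu-\nu$ would be $0\cdot H^cK^c+(\mu-\nu)(H\vee K)$, a different object (though it would coincidentally yield the same numerical conclusion).

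The paper avoids all of this by applying the coherence characterization directly to the pair $(\mu,\nu)$ on $\{X|H,Y|K\}$: for every constituent $C_h$ contained in $H\vee K$ the associated point $Q_h=(q_{h1},q_{h2})$ satisfies $q_{h1}=q_{h2}$ by hypothesis, and coherence forces $(\mu,\nu)$ to lie in the convex hull of these points, hence on the line $q_1=q_2$, so $\mu=\nu$; the equality $X|H=Y|K$ on $H^cK^c$ then follows at once. Equivalently, in betting terms: the gain $\G=H(X-\mu)-K(Y-\nu)$ restricted to $H\vee K$ is identically $-(\mu-\nu)$, and the condition $\min\,\G_{|H\vee K}\le 0\le\max\,\G_{|H\vee K}$ gives $\mu=\nu$. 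If you replace the appeal to Remark \ref{X-CONSTANT} and the unproved additivity step by this direct argument on the pair $(\mu,\nu)$, your proof becomes the paper's.
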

\begin{proof} We observe that $X|H = XH + \mu H^c = XH + \mu H^cK + \mu H^cK^c$ and $Y|K = YK + \nu K^c = YK + \nu HK^c + \nu H^cK^c$; then, the hypothesis amounts to the equality: $XH + \mu H^cK = YK + \nu HK^c$. Now, let $C_1,\ldots,C_m$ be the constituents contained in $H \vee K$ and $Q_1,\ldots,Q_m$ be the corresponding points associated with $(\{X|H, Y|K\}, (\mu,\nu))$. For each $Q_h = (q_{h1},q_{h2})$ it holds that $q_{h1}=q_{h2},\; h\in J_m$. Moreover, by coherence, the point $(\mu,\nu)$ belongs to the convex hull of $Q_1,\ldots,Q_m$; then, it follows $\mu=\nu$; therefore $X|H = XH + \mu H^cK + \mu H^cK^c = Y|K$.
\end{proof}
By the previous result it immediately follows
\begin{corollary}\label{PREL-CPT}{\rm Given any event $H \neq \emptyset$ and any random quantities $X$ and $Y$, we have
\begin{equation}\label{2RQ}
XH = YH \; \Longrightarrow \; X|H = Y|H \,.
\end{equation}
}\end{corollary}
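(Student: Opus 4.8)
The goal is to derive Corollary~\ref{PREL-CPT} from Theorem~\ref{EQ-CRQ}, so the plan is essentially to set up the hypothesis of that theorem from the assumption $XH = YH$.

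First I would take the special case of Theorem~\ref{EQ-CRQ} where $K = H$, so that $Y|K$ becomes $Y|H$ and the disjunction $H \vee K$ is just $H$. Then the hypothesis ``$X|H = Y|K$ when $H \vee K$ is true'' becomes simply ``$X|H = Y|H$ when $H$ is true'', i.e.\ $XH = YH$ on the event $H$. So I would argue: assume $XH = YH$; set $\pr(X|H) = \mu$ and $\pr(Y|H) = \nu$ with $(\mu,\nu)$ coherent; since $X|H = XH + \mu H^c$ and $Y|H = YH + \nu H^c$, when $H$ is true these coincide because $XH = YH$ there. Hence the hypothesis of Theorem~\ref{EQ-CRQ} (with $K=H$) is satisfied, and therefore $X|H = Y|H$.

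Alternatively, and perhaps more transparently, I would run the same convex-hull argument directly: the constituents contained in $H$ all satisfy $q_{h1} = q_{h2}$ (because on each such constituent $X$ and $Y$ take the same value, as $XH = YH$), so each associated point $Q_h$ lies on the diagonal; by coherence $(\mu,\nu)$ lies in their convex hull, which forces $\mu = \nu$; then $X|H = XH + \mu H^c = YH + \nu H^c = Y|H$. Either route works; the cleanest write-up is just to invoke Theorem~\ref{EQ-CRQ} with $K=H$.

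I do not anticipate a real obstacle here — this is a direct specialization. The only point requiring a word of care is checking that the phrase ``$X|H = Y|K$ when $H \vee K$ is true'' in Theorem~\ref{EQ-CRQ} genuinely reduces, when $K=H$, to the condition $XH = YH$ (i.e.\ that the void branch of the conditional plays no role once we restrict to $H$ being true), which is immediate from the representations $X|H = XH + \mu H^c$ and $Y|H = YH + \nu H^c$.
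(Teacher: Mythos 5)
Your proposal is correct and matches the paper exactly: the paper derives Corollary~\ref{PREL-CPT} as an immediate specialization of Theorem~\ref{EQ-CRQ} with $K=H$, which is precisely your main route (your alternative convex-hull argument is just the proof of that theorem unwound in this special case). The one point you rightly flag --- that the hypothesis ``$X|H=Y|K$ when $H\vee K$ is true'' reduces to $XH=YH$ since the void branch contributes nothing on $H$ --- is handled the same way.
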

Given any c.r.q.'s $X|H$ and $Y|K$, with $\pr(X|H)=\mu,\, \pr(Y|K)=\nu$ and with $(\mu,\nu)$ coherent, we have
$X|H + Y|K = XH + \mu H^c + YK + \nu K^c$; then we obtain
\begin{theorem}\label{SUM-PREV}{\rm Given any c.r.q.'s $X|H$ and $Y|K$, we have
\begin{equation}\label{SUM-FORMULA}
\pr(X|H + Y|K) = \pr(X|H) + \pr(Y|K) \,.
\end{equation}
}\end{theorem}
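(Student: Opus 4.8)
The plan is to reduce the claim to the additivity of prevision for \emph{unconditional} random quantities, using the convex--hull characterisation of coherence exactly as in the proof of Theorem~\ref{EQ-CRQ}. Put $\mu=\pr(X|H)$, $\nu=\pr(Y|K)$ and let $\eta=\pr(X|H+Y|K)$, the triple $(\mu,\nu,\eta)$ being coherent. The starting point is the identity recorded just before the statement,
\[
X|H+Y|K \;=\; XH+\mu H^c+YK+\nu K^c \,,
\]
which already displays $X|H+Y|K$ as an unconditional random quantity, and likewise $X|H=XH+\mu H^c$ and $Y|K=YK+\nu K^c$ as unconditional random quantities with previsions $\mu$ and $\nu$.

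First I would consider the constituents $C_1,\ldots,C_m$ contained in $H\vee K$ generated by $\{X|H,\,Y|K\}$ and the associated points $Q_h=(q_{h1},q_{h2})$, where $q_{h1}$ equals the value of $X$ on $C_h$ if $C_h\subseteq H$ and equals $\mu$ if $C_h\subseteq H^c$, and symmetrically for $q_{h2}$. The one computation that matters is that on each such $C_h$ the value of $XH+\mu H^c+YK+\nu K^c$ is precisely $q_{h1}+q_{h2}$: the constituent $C_h$ fixes whether $H$ is true and whether $K$ is true, and in every case the contribution of the ``$X$-part'' $XH+\mu H^c$ equals $q_{h1}$ and the contribution of the ``$Y$-part'' $YK+\nu K^c$ equals $q_{h2}$ (and on the remaining constituent $H^cK^c$ the common value is $\mu+\nu$). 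Consequently the point associated with $C_h$ for the triple $(X|H,\,Y|K,\,X|H+Y|K)$ is $(q_{h1},q_{h2},q_{h1}+q_{h2})$, which lies on the hyperplane $z=x+y$; note that adjoining the sum to the family does not refine the constituents, since the sum is constant on each of them.

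Then I would invoke coherence: by Theorem~\ref{CNES} (equivalently Theorem~\ref{SYSTEM-SOLV}) the point $(\mu,\nu,\eta)$ lies in the convex hull of the points associated with the constituents. Since every one of those points satisfies $z=x+y$, so does $(\mu,\nu,\eta)$, that is $\eta=\mu+\nu$, which is exactly $\pr(X|H+Y|K)=\pr(X|H)+\pr(Y|K)$.

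The argument is essentially the additivity of prevision on unconditional random quantities, so the only real obstacle is a bookkeeping one: because the conditioning events $H$ and $K$ are different, one cannot quote Theorem~\ref{PREL-SUM}(ii) verbatim, and the small constituent computation above --- checking that the third coordinate of every constituent point is the sum of the first two --- is precisely what takes its place. I expect that verification to be the (mild) crux of the proof.
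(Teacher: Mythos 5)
Your proof is correct, but it justifies the key step differently from the paper. Both arguments start from the same representation $X|H+Y|K=XH+\mu H^c+YK+\nu K^c$; from there the paper simply invokes linearity of prevision for \emph{unconditional} random quantities and is done in two lines, writing $\pr(XH+\mu H^c+YK+\nu K^c)=\pr(XH+\mu H^c)+\pr(YK+\nu K^c)$. You instead re-derive the needed additivity from the geometric characterization of coherence, in the style of the paper's proof of Theorem~\ref{EQ-CRQ}: you check that every constituent point for the triple $(X|H,\,Y|K,\,X|H+Y|K)$ has the form $(q_{h1},q_{h2},q_{h1}+q_{h2})$, so the whole convex hull lies on the hyperplane $z=x+y$ and coherence forces $\eta=\mu+\nu$. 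Your route is longer but more self-contained: it does not presuppose linearity of prevision as a primitive, and the bookkeeping you identify as the crux (including the observation that adjoining the sum does not refine the constituents, and that the point for $H^cK^c$ is $(\mu,\nu,\mu+\nu)$) is exactly what makes it work. One small correction: Theorem~\ref{CNES} concerns probability assessments on conditional events; the characterization you actually need here is the prevision version, Theorem~\ref{SYSTEM-SOLV} (or Theorem~\ref{CNES-PREV-I_0-INT}). This does not affect the validity of the argument.
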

\begin{proof}
By linearity of prevision, we have
\[
\pr(X|H + Y|K) = \pr(XH + \mu H^c + YK + \nu K^c) =
 \]
 \[
 = \pr(XH + \mu H^c) + P(YK + \nu K^c) = \pr(X|H) + \pr(Y|K) \,.
\]
\end{proof}
We recall that, agreeing to the betting metaphor, the prevision $\mu$ for a c.r.q. $X|H$ is what should be payed in order to receive the amount $X|H$; then by linearity of prevision
\[
 \pr(X|H) = \mu = x_1P(E_1H) + \cdots + x_nP(E_nH) + \mu P(H^c) = \pr(XH) + \mu P(H^c) \,
 \]
from which it follows: $\pr(XH)  = P(H)\pr{(X|H)}$. More in general, we have
\begin{theorem}\label{COMP-PREV-TH}{\rm Given two events $H \neq \emptyset, K \neq \emptyset$ and a r.q. $X$, let $(x,y,z)$ be a coherent assessment  on $\{H|K, X|HK, XH|K\}$. Then: (i) $X|HK = (XH + yH^c)|K$; (ii) $z=xy$; that is: $\pr(XH|K) = P(H|K)\pr(X|HK)$.
}\end{theorem}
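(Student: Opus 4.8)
The plan is to work with the representation of conditional random quantities as unconditional ones and then invoke Theorem~\ref{EQ-CRQ} and linearity of prevision. Write $P(H|K)=x$, $\pr(X|HK)=y$, $\pr(XH|K)=z$, with $(x,y,z)$ coherent. The first move is to expand each of the three objects using the basic identity $Z|A = ZA + \pr(Z|A)A^c$. In particular $X|HK = XHK + y(HK)^c = XHK + yH^cK + yHK^c + yH^cK^c$, while the candidate quantity $(XH + yH^c)|K = (XH + yH^c)K + zK^c$ once we note that its prevision equals $\pr(XH|K)$; but a cleaner route for part~(i) is to avoid naming that prevision a priori and instead show directly that $X|HK$ and $(XH+yH^c)|K$ agree whenever $HK \vee K = K$ is true, i.e.\ whenever $K$ is true. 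When $K$ is true, $(XH+yH^c)|K = XH + yH^c$; and $X|HK$, restricted to $K$ being true, equals $XH + yH^c$ as well, since on $HK$ both give $X=XH$ (as $H$ true) and on $H^cK$ both give $y$. Hence the two c.r.q.'s coincide on the disjunction of their conditioning events (which is just $K$), and Theorem~\ref{EQ-CRQ} — applied to $X|HK$ and $(XH+yH^c)|K$, with the pair of previsions coherent — yields $X|HK = (XH+yH^c)|K$, proving~(i).

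For part~(ii), apply $\pr$ to both sides of the identity in~(i). By linearity of prevision (as used in the proof of Theorem~\ref{SUM-PREV}), $\pr\big[(XH+yH^c)|K\big] = \pr\big[(XH+yH^c)K + \pr((XH+yH^c)|K)\,K^c\big]$; more directly, using the compound-prevision relation $\pr(ZK) = P(K)\pr(Z|K)$ established just before the theorem statement for a single conditioning event, we get $z = \pr(XH|K)$ and also $\pr\big[(XH+yH^c)|K\big] = \pr(XH|K) + y\,P(H^c|K)$ by linearity applied inside the conditional-on-$K$ prevision (Theorem~\ref{PREL-SUM}(ii) and Theorem~\ref{SUM-PREV}). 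Since the left side of~(i) has prevision $\pr(X|HK) = y$ — wait, this is not immediate because $X|HK$ need not have prevision $y$ once we rewrite it; rather, $y = \pr(X|HK)$ by definition. So taking previsions of~(i): $y = \pr\big[(XH+yH^c)|K\big]$. Now expand the right side: $(XH+yH^c)|K = XHK + yH^cK + \eta K^c$ where $\eta = \pr[(XH+yH^c)|K]$; applying $\pr$ and the single-event compound formula $\pr(XHK)=P(HK)\pr(X|HK)=P(HK)\,y$ together with $P(H^cK) = P(K)P(H^c|K) = P(K)(1-x)$ and $P(K^c)$, one solves for $\eta$ and finds $\eta = P(H|K)\,y = xy$. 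Since also $\eta = z$ (both equal $\pr(XH|K)$, by~(i)), we conclude $z = xy$.

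The main obstacle I anticipate is the bookkeeping in the last step: one must be careful that the previsions appearing when re-expanding $(XH+yH^c)|K$ are consistent — i.e.\ that writing $(XH+yH^c)|K = (XH+yH^c)K + zK^c$ is legitimate, which requires first knowing (from~(i)) that its prevision is $z$, or alternatively computing that prevision from scratch via linearity and the already-proved single-event compound prevision theorem and then checking it equals $z$. A clean way to sidestep circularity is: prove~(i) purely via Theorem~\ref{EQ-CRQ} (no prevision computation needed beyond coherence of $(x,y,z)$), then in~(ii) compute $\pr$ of the right-hand side of~(i) using linearity (Theorems~\ref{PREL-SUM}, \ref{SUM-PREV}) and the identity $\pr(WK)=P(K)\pr(W|K)$ applied to $W=X$ over $HK$ and to the constant $y$, giving $\eta = P(HK)y/P(K)\cdot\! \big(\text{handled via } P(HK)=P(H|K)P(K)\big) + y P(H^cK)/P(K)\cdot 0$-type terms — and the arithmetic collapses to $xy$. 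Matching $\eta$ against $z=\pr(XH|K)$ finishes it. The one subtlety worth a sentence in the final write-up is the degenerate case $P(K)=0$ or where $y \notin X_{|HK}$, but Remark~\ref{X-CONSTANT} and the coherence hypothesis on $(x,y,z)$ cover these, so no separate case analysis is really needed.
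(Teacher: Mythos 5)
Your part (i) is correct and is essentially the paper's own argument: both show that $X|HK$ and $(XH+yH^c)|K$ take the same value whenever $HK\vee K=K$ is true, and then invoke Theorem~\ref{EQ-CRQ} to upgrade this to equality of the two conditional random quantities.

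Part (ii), as written, contains a genuine error at the point where you ``solve for $\eta$''. You define $\eta=\pr[(XH+yH^c)|K]$ and then assert both that the computation yields $\eta=P(H|K)\,y=xy$ and that $\eta=z$ ``since both equal $\pr(XH|K)$''. Neither assertion is correct: by part (i), $\eta=\pr(X|HK)=y$, not $xy$; and $\eta$ is not $\pr(XH|K)$ but $\pr(XH|K)+y\,P(H^c|K)=z+y(1-x)$ --- the contribution $y\,P(H^cK)$, which you dismiss as a ``$0$-type term'', is exactly the missing $y(1-x)$. Redoing your own expansion honestly, $\eta=P(HK)\,y+y\,P(K)(1-x)+\eta\,P(K^c)$ gives $\eta=y$ (when $P(K)>0$), consistent with (i). Your two mistakes cancel and you land on the true conclusion $z=xy$, but the derivation is not valid. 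The repair is already sitting in your text: you correctly record both $y=\pr[(XH+yH^c)|K]$ and $\pr[(XH+yH^c)|K]=\pr(XH|K)+y\,P(H^c|K)$; chaining these gives $y=z+y(1-x)$, hence $z=xy$, which is precisely the paper's one-line proof of (ii). Note that this route uses only linearity (Theorem~\ref{PREL-SUM}) and never divides by $P(K)$, so the degenerate case $P(K)=0$ that you worry about at the end simply does not arise.
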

\begin{proof}
(i) First of all we observe that $HK \vee K = K$; moreover
\[
X|HK = XHK + y(HK)^c = XHK + yK^c + yH^cK = (XH + yH^c)K + yK^c \,,
\]
and, by setting $\pr[(XH + yH^c) | K] = \mu$, we have $(XH + yH^c) | K = $
\[
= (XH + yH^c)K + \mu K^c = XHK + yH^cK + \mu K^c = X|HK + (\mu-y)K^c \,.
\]
Hence $X|HK = (XH + yH^c) | K$ for $K=1$. Then, by Theorem \ref{EQ-CRQ}, $y=\mu$ and $X|HK = (XH + yH^c)|K$. \\
(ii) By linearity of prevision:
\[
y = \pr(X|HK) = \pr[(XH + yH^c)|K] = \pr(XH|K) + yP(H^c|K) = z + y(1 - x) \,;
\]
hence: $z=xy$, which represents the {\em general compound prevision theorem}.
\end{proof}
We observe that, given two r.q.'s $X|H, Y|H$, from condition (ii) in Theorem \ref{COMP-PREV-TH}, we have
\begin{small}
\[
\pr(XH|H) = P(X|H)P(H|H)=\pr(X|H),\, \pr(YH|H) = P(Y|H)P(H|H)=\pr(Y|H)
\]
\end{small}
and, assuming $XH=YH$, it follows $\pr(Y|H) = \pr(YH|H) = \pr(XH|H) = \pr(X|H)$; hence $X|H = Y|H$, which is the result given in Corollary \ref{PREL-CPT}. \\
We now give a definition for the symbol $(X|H)|K$; we will show that its meaning is different from $X|HK$. We recall that $X|H = XH + xH^c$, where $x = \pr(X|H)$.
\begin{definition}\label{ITER-RQ}{\rm
Given any events $H,K$, with $H \neq \emptyset, K \neq \emptyset$, and a finite r.q. $X$, with $x = \pr(X|H)$, we define $(X|H)|K = (XH + xH^c)|K$.
}\end{definition}
We have two remarks: \\
(a) In condition (i) of Theorem \ref{COMP-PREV-TH} the value $y$ is (not the prevision $x$ of $X|H$ but) the prevision of $X|HK$; hence
\[
X|HK = (XH + yH^c)|K \; \neq \; (X|H)|K = (XH + xH^c)|K \,.
\]
In other words, $X|HK \neq (X|H)|K$; but, under the hypothesis $H \subseteq K$, we have $X|HK = X|H$ and $y = \pr(X|HK) = \pr(X|H) = x$; then
\begin{equation}\label{XCOND-ITER}
X|H = X|HK = (XH + yH^c)|K = (XH + xH^c)|K = (X|H)|K \,.
\end{equation}
We observe that, given any events $A,H,K$, we have $A|HK \neq (A|H)|K$; therefore,  in agreement with \cite{Adam75,Kauf09}, in our approach the Import-Export Principle of McGee (\cite{McGe89}) does not hold. To illustrate by an example that the Import-Export Principle is not valid in general, assume that $K=H^c\vee A$, which is the material conditional associated with  $A|H$; moreover assume that $AH=\emptyset$, so that $P(A|H)=0$. Then the Import-Export Principle cannot be applied because  $A|HK=A|AH=A|\emptyset$; on the contrary, as $H^c\vee A=H^c$, by Definition \ref{ITER-RQ} we have
\[
(A|H)|K = (A|H)|(H^c \vee A) = (A|H)|H^c = (AH + 0\cdot H^c)|H^c = 0|H^c =0;
\]
therefore  (in agreement with the intuition)	 $\pr[(A|H)|K]=P(A|H)=0$, while $P(H^c\vee A)$ could be high. A probabilistic analysis of constructive and non-constructive inferences from the material conditional $A\vee B$ to the associated conditional $B|A^c$ has been given in \cite{GiOv12}.
\\
(b) Given any c.r.q.'s $X|H, Y|K$, and a coherent assessment $\pr(X|H) = x$, $\pr(Y|K) = y$, as $H = H(H \vee K), K = K(H \vee K)$, we have
\begin{equation}\label{XgHvK}
\begin{array}{l}
X|H = X|H(H \vee K) =(X|H)|(H\vee K) = (XH + x H^c)|(H \vee K) \,,
\\
Y|K = Y|K(H \vee K)=(Y|K)|(H\vee K) = (YK + y K^c)|(H \vee K) \,.
\end{array}
\end{equation}
Then, by (\ref{XY-SUM}), we obtain
\begin{equation}\label{SUM-GEN}
X|H + Y|K = (XH + x H^c + YK + y K^c)|(H \vee K) \,,
\end{equation}
which shows that $X|H + Y|K$ coincides with the c.r.q. $Z|(H \vee K)$, where $Z=XH + x H^c + YK + y K^c$, with $\pr[Z|(H \vee K)] = x + y$.
\section{Conjunction of conditional events}
\label{SEC:CONJ}
Some authors look at the conditional ``if $A$ then $C$'', denoted $A \rightarrow C$ , as the event $A^c \vee C$ ({\em material conditional}), but since some years it is becoming standard to look at $A \rightarrow C$ as the conditional event $C|A$ (see e.g. \cite{GiOv12,PfKl10}).  A theory of the compounds of conditionals is a not easy and controversial topic of research; it has been studied by many researchers in many fields, such as mathematics, philosophical logic, artificial intelligence, nonmonotonic reasoning, psychology. A very general discussion of the different aspects which concern conditionals has been given in \cite{Edgi95,Miln97}.
\subsection{Compounds of conditionals in the approach of Kaufmann}
The probabilistic theory of conditionals proposed in \cite{Kauf09} is based on the model theory proposed in \cite{vanF76} and on the assignment of truth values to complex conditionals suggested in  \cite{StJe94}.
In particular, Kaufmann uses the notion of {\em Stalnaker Bernoulli space} to build a complex procedure by means of which probabilistic formulas are obtained which suggest how to assign values to conditionals. To  illustrate such a procedure,  consider a conditional $A \rightarrow C$ and the associated conditional event $C|A$, with $P(A)>0$, so that $P(C|A)=\frac{P(AC)}{P(A)}$. Now, let us consider an infinite sequence of pairs of events $(A_1, C_1) \,,\; (A_2, C_2) \,,\; \ldots (A_n, C_n) \,,\; \ldots$,
with the events in each pair stochastically independent from the events in the other ones, and
with $P(A_i) = P(A),\, P(A_iC_i)=P(AC),\; \forall i$, so that $P(C_i|A_i) = P(C|A),\, \forall i$. In the approach of Kaufmann, in order to assign the value to $A \rightarrow C$, the pairs in the sequence are observed {\em until the first time} the antecedent, say $A_i$, is true; then the value of the consequent, $C_i$, is assigned to $A \rightarrow C$. In other words, by considering the partition of $\Omega$ obtained by expanding the expression
\[
(A_1C_1 \vee A_1C_1^c \vee A_1^c) \wedge \cdots \wedge (A_nC_n \vee A_nC_n^c \vee A_n^c) \wedge \cdots =
 H_0 \vee H_1 \vee H_2 \vee H_3 \,,
\]
where $H_1 = A_1C_1,\; H_2 = \bigvee_{n=2}^{\infty} \, A_1^c \cdots A_{n-1}^cA_nC_n,\;
H_3 = A_1^cA_2^c \cdots A_n^c \cdots$, $H_0 = H_1^cH_2^cH_3^c$.
By definition, in the paper of Kaufmann it holds that:
\[
(A \rightarrow C) \wedge A = 1 \Longleftrightarrow H_1 = 1;\;
(A \rightarrow C) \wedge A^c = 1 \Longleftrightarrow H_2 = 1;
\]
hence $(A \rightarrow C) = 1 \Longleftrightarrow H_1 \vee H_2 = 1$.
Then, in the Stalnaker Bernoulli space a probability $P^*$ is constructed such that
\[
P^*[(A \rightarrow C) \wedge A] = P(H_1) = P(A_1C_1) = P(AC) \,;
\]
\[ P^*[(A \rightarrow C) \wedge A^c] = P(H_2) =
P(A_1^c)P(A_2C_2) + P(A_1^c)P(A_2^c)P(A_3C_3) + \cdots =
\]
\[
= P(AC)P(A^c)[1 + P(A^c) + \cdots + [P(A^c)]^n + \cdots] = \frac{P(AC)P(A^c)}{P(A)} \,;
\]
\[
P^*(A \rightarrow C) = P^*[(A \rightarrow C) \wedge A] + P^*[(A \rightarrow C) \wedge A^c] =
P(H_1) + P(H_2) =
\]
\[
= P(AC) + \frac{P(AC)P(A^c)}{P(A)} = \frac{P(AC)[P(A) + P(A^c)]}{P(A)} = \frac{P(AC)}{P(A)} = P(C|A) \,.
\]
Then, as suggested by the result above, Kaufmann shows that, by defining the truth value of $A \rightarrow C$ as:
\[
V(A \rightarrow C) = \left\{
\begin{array}{ll}
1, & AC \;\; true \\
0, & AC^c \;\; true \\
P(C|A), & A^c \;\; true
\end{array}\right.\] \\
it follows: $P(A \rightarrow C) = P(C|A)$. With the approach of Kaufmann, the conditional $A \rightarrow C$ is indeterminate if and only if $H_3$ is true, which has probability
$P(H_3) = P(A_1^c) \cdots P(A_n^c) \cdots = P(A^c) \cdots P(A^c) \cdots = 0$. \\
By a similar reasoning, for the conjoined conditional $(A \rightarrow B) \wedge (C \rightarrow D)$, assuming $P(A \vee C) > 0$, Kaufmann obtains the formula
\[\begin{array}{l}
P[(A \rightarrow B) \wedge (C \rightarrow D)] = \frac{P(ABCD) + P(B|A) P(A^cCD) + P(D|C) P(ABC^c)}{P(A \vee C)} \,.
\end{array}\]
Based on this result, Kaufmann suggests a natural way of defining the values of conjoined conditionals.
\begin{remark}{\rm In the setting of coherence, if $P(C|A)=z$, then $C|A=AC + zA^c$ and, assuming $P(A)>0$, by linearity of prevision (by iteratively replacing $z$) we obtain
\[
\begin{array}{l}
z = P(AC) + zP(A^c) = P(AC) + P(A^c)[P(AC) + zP(A^c)] =\\
= P(AC)[1 + P(A^c)] + zP(A^c)^2 =  \cdots =
\\
= P(AC)[1 + P(A^c) + \cdots + P(A^c)^{n-1}] + zP(A^c)^n = \cdots =
\\
= P(AC)[1 + P(A^c) + \cdots + P(A^c)^{n-1} + \cdots] =
 P(AC) \cdot \frac{1}{P(A)} = \frac{P(AC)}{P(A)} \,.
\end{array}
\]
}\end{remark}
The previous iterative scheme can be associated to a sequence of conditional bets, where the bet is repeated each time it is called off; the process ends the first time the bet is not called off.
\subsection{Some critical comments}
We think that the approach of Kaufmann opens an interesting perspective; it produces very nice results and preserves, as we will show, well known probabilistic properties which hold in the classical setting.
At the same time, we think that, in the setting of coherence, we can obtain (and we can generalize) such results in a direct and simpler way. As a first comment, we observe that to avoid ambiguities in his construction Kaufmann should refer to the sequence of pairs of events
$(A_1, C_1) \,,\; (A_2, C_2) \,,\; \ldots,\; (A_n, C_n) \,,\; \ldots$,
and not simply to the pair $(A,C)$. The iterative procedure introduced by Kaufmann can also be used in our approach: \\
- given any integer $n$, let us consider the pairs
$(A_1, C_1), (A_2, C_2), \ldots (A_n, C_n)$,
and the partition $\{H_0,\ldots,H_3\}$, where
\[
H_1 = A_1C_1,\, H_2 = \bigvee_{k=2}^{n} \, A_1^c \cdots A_{k-1}^cA_kC_k,\, H_3=A_1^c \cdots A_n^c,\, H_0= H_1^cH_2^cH_3^c;
\]
- then, let us consider the conditional event $E|K$, where $E = H_1 \vee H_2$ and $K = H_0 \vee H_1 \vee H_2 = A_1 \vee \cdots \vee A_n$, with $P(E|K) = z$. We have
\[
E|K = \left\{\begin{array}{ll}
1, & H_1 \vee H_2 = 1 \,, \\
0, & H_0 = 1 \,, \\
z, & H_3 = 1 \,.
\end{array}\right.
\]
We recall that $P(A_i)=P(A) > 0, P(A_iC_i)=P(AC), i=1,\ldots,n$; then
\[
\begin{array}{l}
P(H_1) = P(AC) \,,\; P(H_2) = P(AC)[P(A^c) + \cdots + P(A^c)^{n-1}] \,,\\
P(H_1 \vee H_2) = P(H_1) + P(H_2) = P(AC)[1 + P(A^c) + \cdots + P(A^c)^{n-1}] =\\
= P(AC) \frac{1-P(A^c)^n}{1 - P(A^c)} = P(C|A) [1-P(A^c)^n] \,,\\
P(H_3) = P(A_1^c \cdots A_n^c) = P(A^c)^n \, \underset{n \rightarrow \infty}{\longrightarrow} \, 0 \,.
\end{array}
\]
Therefore
\[\begin{array}{l}
P(E|K) = z = 1 \cdot P(H_1 \vee H_2) + 0 \cdot P(H_0) + z \cdot P(H_3) = \\
= P(C|A) [1-P(A^c)^n] + z P(A^c)^n \,;
\end{array}\]
hence: $z[1 - P(A^c)^n] = P(C|A) [1-P(A^c)^n]$; so that: $z = P(C|A),\, \forall \, n$. \\
The probability of the event $''E|K \; true$ or $E|K \; false''$
tends to 1, when $n \rightarrow \infty$, and the probability $z$ is constant, $z=P(C|A),\; \forall n$. \\
{\em A basic aspect}: if we only assess $P(B|A)=x, P(D|C)=y$, how can we check the consistency of the extension $P[(A \rightarrow B) \wedge (C \rightarrow D)]=z$ ? \\
In our setting $(A \rightarrow B) \wedge (C \rightarrow D)$ is looked at as a {\em conditional random quantity} $(B|A) \wedge (D|C)$; hence, we speak of {\em previsions} (and not of probabilities) of conjoined conditionals.  Moreover, we can manage without problems the case $P(A \vee C) = 0$ and, by starting with the assessment $P(B|A) = x,\; P(D|C) = y$, we can determine the values $z = \pr[(B|A) \wedge (D|C)]$ which are coherent extensions of $(x,y)$ on $\{B|A,D|C\}$.
\subsection{Conjunction of conditionals in the setting of coherence}
We introduce the notion of conjunction, by first giving some logical and probabilistic remarks. Given any events $A, B, H$, with $H \neq \emptyset$, let us consider the conjunction $AB$, or the conjunction $(A|H) \wedge (B|H) = AB|H$. In terms of indicators we have
\[
AB = min \, \{A, B\} = A \cdot B \,,\; AB|H = min \, \{A, B\}|H = (A \cdot B)|H \,;
\]
moreover, if we assess $P(A|H) = x, P(B|H) = y$, then
\[
A|H = AH +xH^c = \left\{\begin{array}{l}
A, \mbox{ if } H=1, \\
x, \mbox{ if } H=0,
\end{array}\right.
 B|H=BH + yH^c = \left\{\begin{array}{l}
B, \mbox{ if } H=1, \\
y, \mbox{ if } H=0.
\end{array}\right.
\]
As we see, {\em conditionally on $H$ being true}, i.e. $H=1$, we have:
\[
AB|H = min \, \{A, B\}|H = min \, \{A|H, B|H\}|H \in \{0,1\} \,.
\]
We set $Z = min \, \{A|H, B|H\} = min \, \{AH +xH^c, BH +yH^c\}$; we have $Z \in \{1,0,x,y\}$ and, defining $\pr(Z|H) = z$, we have $Z|H = ZH + z H^c$, with $Z|H \in \{1,0,z\}$. We observe that $ZH = ABH$; then, by Corollary \ref{PREL-CPT}, we have $Z|H = AB|H$. In other words, $min \, \{A|H, B|H\}|H$ and $AB|H$ are the same conditional random quantity. \footnote{In particular, for $B=A$, in agreement with Definition \ref{ITER-RQ} we have $Z = A|H,\, Z|H = (A|H)|H = A|H,\, z=x$; the equality $(A|H)|H = A|H$ still holds from the viewpoint of iterated conditionals introduced in \cite{GiSa13a}.} Then
\begin{equation}\label{PRE-CONJ}
(A|H) \wedge (B|H) = min \, \{A|H, B|H\}\,|\, H = min \, \{A|H, B|H\}\,|\, (H \vee H) \,.
\end{equation}
Based on formula (\ref{PRE-CONJ}), we introduce below the notion of conjunction among  conditional events.
\begin{definition}[Conjunction]\label{CONJUNCTION}{\rm Given any pair of conditional events $A|H$ and $B|K$, with $P(A|H) = x, P(B|K) = y$, we define their conjunction as
\[
(A|H) \wedge (B|K) = min \, \{A|H, B|K\} \,|\, (H \vee K) \,.
\]
}\end{definition}
\noindent Notice that, defining $Z = min \, \{A|H, B|K\}$, the conjunction $(A|H) \wedge (B|K)$ is the c.r.q. $Z \,|\, (H \vee K)$. Moreover, defining $T = (A|H) \cdot (B|K)$, by Corollary \ref{PREL-CPT} it holds that $Z \,|\, (H \vee K)=T \,|\, (H \vee K)$, while $Z \neq T$. Then, we have
\begin{equation}\label{CONJ-PROD}
(A|H) \wedge (B|K) = (A|H) \cdot (B|K) \,|\, (H \vee K) \,.
\end{equation}
\noindent {\em Interpretation with the betting scheme.}
By assessing $\pr[(A|H) \wedge (B|K)] = z$, you agree to pay the amount $z$ by receiving the amount
$ min \, \{A|H, B|K\}$ if $H \vee K = 1$, or the amount $z$ if the bet is {\em called off} $\;\;(H \vee K = 0)$. That is, you pay $z$, by receiving the amount
\[
(A|H) \wedge (B|K)=
\left \{\begin{array}{ll}
1, & AHBK = 1, \\
0, & A^cH \vee B^cK = 1, \\
x, & H^cBK = 1, \\
y, & AHK^c = 1, \\
z, & H^cK^c = 1;
\end{array}\right.
\]
therefore, {\em operatively}, for $(A|H) \wedge (B|K)$ we obtain the  {\em representation}
\begin{equation}\label{CONJ-REPR}
(A|H) \wedge (B|K) \; = \; 1 \cdot AHBK + x \cdot H^cBK + y \cdot AHK^c + z \cdot H^cK^c \,.
\end{equation}
Then, by {\em linearity} of prevision, it follows
\[
\pr[(A|H) \wedge (B|K)] = z
= P(AHBK) + x P(H^cBK) + y P(AHK^c) + z P(H^cK^c),
\]
and we obtain:
$zP(H \vee K) = P(AHBK) + x P(H^cBK) + y P(AHK^c)$. \\
In particular, if $P(H \vee K) > 0$,  we obtain  the result of Kaufmann
\[
\pr[(A|H) \wedge (B|K)] = \frac{P(AHBK) + P(A|H) P(H^cBK) + P(B|K) P(AHK^c)}{P(H \vee K)} \,.
\]
{\em Some particular cases.}\label{SEC:PART-CASE}
We examine below the conjunction of $A|H$ and $B|K$ for special assessments $(x,y)$ on $\{A|H,B|K\}$ and/or when there are some logical dependencies among $A, B, H, K$. We set $P(A|H)=x,\, P(B|K)=y,\, \pr[(A|H) \wedge (B|K)] = z$.
\begin{enumerate}
\item If $x=y=1$, then $(A|H) \wedge (B|K) = 1 \cdot AHBK + 1 \cdot H^cBK + 1 \cdot AHK^c +$ \linebreak $+ z \cdot H^cK^c =(AH \vee H^c) \wedge (BK \vee K^c)|(H \vee K) = \C(A|H,B|K)$, where $\C(A|H,B|K)$ is the \textit{quasi conjunction}  of $A|H$ and $B|K$.
\item \label{COMP-TH} $K = AH$.
From (\ref{CONJ-PROD}) we have
\[
\begin{array}{l}
(A|H) \wedge (B|AH) = [(A|H) \cdot (B|AH)]|H =\\
= [(AH + xH^c)(ABH + y(AH)^c)]|H = (ABH + xyH^c)|H =\\
= ABH|H + xyH^c|H = AB|H = \C(A|H,B|AH) \,.
\end{array}
\]
Then
\begin{small}
\[
\pr[(A|H) \wedge (B|AH)] = P(AB|H) = P(A|H)P(B|AH) = \pr(A|H)\pr(B|AH)\,,
\]
\end{small}
that is: $z=xy$. On the other hand
\[
AB|H = ABH + zH^c = ABH + xyH^c = (A|H)\cdot(B|AH) \,;
\]
therefore $\pr[(A|H) \cdot (B|AH)] = \pr(A|H)\pr(B|AH)$, which means, as discussed in \cite{GiSa13a}, that $A|H$ and $B|AH$ are uncorrelated (see the next case). In particular, for $H=\Omega$, we have $(B|A)\wedge A = AB$ and
\[
\pr[(B|A)\wedge A]=\pr[(B|A)\cdot A]=P(AB)=P(B|A)P(A) \,.
\]
\item Let be given any conditional events $A|H, B|K$, with $HK=\emptyset$ and with $P(A|H)=x, P(B|K)=y$. As $H$ and $K$ are logically incompatible, the assessment $(x,y)$ is coherent for every $(x,y) \in [0,1]^2$; moreover, it can be verified that the assessment $\pr[(A|H) \wedge (B|K)] = z$ is a \linebreak coherent extension of $(x,y)$ if and only if $z=xy$. We have \linebreak
    $(A|H) \cdot (B|K) = (AH + xH^c)(BK + yK^c) = xH^cBK + yAHK^c + xyH^cK^c$; moreover
    \[
    (A|H) \wedge (B|K) = (A|H) \cdot (B|K) \,|\, (H \vee K) = xH^cBK + yAHK^c + zH^cK^c.
     \]
     From $z=xy$, it follows $(A|H) \wedge (B|K)= (A|H) \cdot (B|K)$; that is, the conjunction is the {\em product} of the {\em conditional random quantities} $A|H, B|K$. Therefore
     \begin{equation}\label{IND-INC}
\pr[(A|H) \cdot (B|K)] = P(A|H)P(B|K) = \pr(A|H)\pr(B|K) \,;
\end{equation}
that is, the {\em prevision of the product} coincides with {\em the product of previsions},
which means that, under the hypothesis $HK=\emptyset$, the random quantities $A|H, B|K$ are uncorrelated. As discussed in \cite{GiSa13a}, the equality (\ref{IND-INC}) does not mean that $A|H$ and $B|K$ are stochastically independent.
\item We recall that from $A \subseteq B$, it follows $AB = A$. This property {\em still holds for conditional events}; that is, under the hypothesis $A|H \subseteq B|K$, where the symbol $\subseteq$ denotes the well known inclusion relation of Goodman and Nguyen (\cite{GoNg88}), we can verify that  $(A|H) \wedge (B|K) = A|H$.  Indeed,  the relation  $A|H \subseteq B|K$ amounts to $AH \subseteq BK$ and $B^cK \subseteq A^cH$ and coherence requires $x \leq y$.  In terms of indicators the inclusion relation implies $A|H \leq B|K$; hence $min \, \{A|H, B|K\} = A|H$. Then, by (\ref{XgHvK}), $A|H = A|H(H \vee K) = (AH +xH^c)|(H \vee K)=(A|H)|(H \vee K) $; hence
\[
\begin{small}
(A|H) \wedge (B|K) =min \, \{A|H, B|K\} |(H\vee K)=(A|H)|(H \vee K) = A|H.
\end{small}
\]
We remark that $A|H \wedge B|K = A|H$ does not imply $A|H \subseteq B|K$; for instance, given any events $H,B,K$, with $H^cB^cK \neq \emptyset$, it holds that $H^c|H \wedge B|K = H^c|H$, but $H^c|H \nsubseteq B|K$; in fact, if $H^cB^cK$ is true, then $B|K$ is false, while $H^c|H$ is void.
\end{enumerate}
\section{Lower and upper bounds for $(A|H) \wedge (B|K)$}
We will now determine the coherent extensions of the assessment $(x,y)$ on $\{A|H, B|K\}$ to the conjunction $(A|H) \wedge (B|K)$.
We recall that the extension $z = P(AB|H)$ of the assessment $(x,y)$ on $\{A|H, B|H\}$, with $A,B,H$ logically independent, is coherent if and only if: $max\{x+y-1,0\} \leq z \leq min\{x,y\}$.
\\ {\em The next theorem show that the same results holds for $(A|H) \wedge (B|K)$}.
 \begin{theorem}{\rm
 Given any coherent assessment $(x,y)$ on $\{A|H, B|K\}$, with $A,H,B,K$ logically independent, and with $H \neq \emptyset, K \neq \emptyset$, the extension $z = \pr[(A|H) \wedge (B|K)]$ is coherent if and only if  the Fr\'echet-Hoeffding bounds are satisfied, that is
\begin{equation}\label{LOW-UPPER}
max\{x+y-1,0\} = z' \; \leq \; z \; \leq \; z'' = min\{x,y\} \,.
\end{equation}
}\end{theorem}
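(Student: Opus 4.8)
The plan is to read off the coherent values of $z=\pr[(A|H)\wedge(B|K)]$ from the operative characterization of coherence (Theorem~\ref{CNES-PREV-I_0-INT}), applied to the family $\{A|H,\,B|K,\,(A|H)\wedge(B|K)\}$ and based on the representation~(\ref{CONJ-REPR}). First I would list the constituents: since $A,H,B,K$ are logically independent and $\H_3=H\vee K$, the constituents contained in $H\vee K$ are the eight events $C_1=AHBK$, $C_2=AHB^cK$, $C_3=AHK^c$, $C_4=A^cHBK$, $C_5=A^cHB^cK$, $C_6=A^cHK^c$, $C_7=H^cBK$, $C_8=H^cB^cK$, while $C_0=H^cK^c$. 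By~(\ref{CONJ-REPR}) the conjunction equals $1$ on $C_1$, $y$ on $C_3$, $x$ on $C_7$, and $0$ on $C_2,C_4,C_5,C_6,C_8$, so the associated points are $Q_1=(1,1,1)$, $Q_2=(1,0,0)$, $Q_3=(1,y,y)$, $Q_4=(0,1,0)$, $Q_5=(0,0,0)$, $Q_6=(0,y,0)$, $Q_7=(x,1,x)$, $Q_8=(x,0,0)$. Hence the system $\Sigma$ in the unknowns $\lambda_1,\dots,\lambda_8$ is
\[
\lambda_1+\lambda_2+\lambda_3+x(\lambda_7+\lambda_8)=x,\quad \lambda_1+\lambda_4+\lambda_7+y(\lambda_3+\lambda_6)=y,\quad \textstyle\sum_{h=1}^{8}\lambda_h=1,\ \ \lambda_h\ge 0 ,
\]
and a solution $\Lambda$ is compatible with the value $z$ precisely when $z=\lambda_1+y\lambda_3+x\lambda_7$.

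Next I would note that, because $A,H,B,K$ are logically independent, one verifies that $I_0=\emptyset$ for every coherent $(x,y,z)$, so that by Theorem~\ref{CNES-PREV-I_0-INT} the assessment $(x,y,z)$ is coherent if and only if $\Sigma$ is solvable, i.e. $(x,y,z)\in\I_3$, the convex hull of $Q_1,\dots,Q_8$. The solution set of the subsystem made of the first two equations together with $\sum_h\lambda_h=1$, $\lambda_h\ge0$ is a non-empty compact convex polytope (non-emptiness is the assumed coherence of $(x,y)$), and $z=\lambda_1+y\lambda_3+x\lambda_7$ is linear in $\Lambda$; therefore the set of coherent values of $z$ is the closed interval $[z',z'']$, where $z'$ and $z''$ are the minimum and the maximum of this linear functional over that polytope. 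It thus remains to compute $z'$ and $z''$.

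For the upper bound, the first equation gives $\lambda_1+\lambda_2+\lambda_3=x(1-\lambda_7-\lambda_8)$, so $z\le(\lambda_1+\lambda_2+\lambda_3)+x\lambda_7=x(1-\lambda_8)\le x$, and symmetrically, using the second equation, $z\le y$; hence $z\le\min\{x,y\}$. For the lower bound, adding the first two equations yields $x+y=z+(\lambda_1+\lambda_2+\lambda_3+\lambda_4+\lambda_7+y\lambda_6+x\lambda_8)\le z+(1-\lambda_5)\le z+1$, so $z\ge x+y-1$, while $z\ge0$ is immediate; hence $z\ge\max\{x+y-1,0\}$. Then I would exhibit extremal solutions showing both bounds are attained: the solution supported on $\{C_1,C_2,C_4\}$ with weights $x+y-1,\,1-y,\,1-x$ (if $x+y\ge1$), or on $\{C_2,C_4,C_5\}$ with weights $x,\,y,\,1-x-y$ (if $x+y<1$), realizes $z=\max\{x+y-1,0\}$, and, assuming $x\le y$, the solution supported on $\{C_1,C_4,C_5\}$ with weights $x,\,y-x,\,1-y$ realizes $z=x=\min\{x,y\}$ (the case $y\le x$ being symmetric). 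Combined with the interval property, this gives~(\ref{LOW-UPPER}).

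The step I expect to be the main obstacle is the bookkeeping rather than any single computation: writing down the nine constituents and the points $Q_h$ correctly---in particular the three constituents $C_1,C_3,C_7$ on which the conjunction takes the values $1$, $y$, $x$---and checking carefully that $I_0=\emptyset$ for every coherent $(x,y,z)$, so that solvability of $\Sigma$ alone (and not the full recursive test) characterizes coherence here. Once these points are settled, the linear bounds and the explicit extremal solutions are routine, and they return exactly the Fr\'echet-Hoeffding bounds already known for $z=P(AB|H)$.
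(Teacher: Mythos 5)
Your proposal is correct and follows essentially the same route as the paper: the same eight constituents and points $Q_h$, the same system $\Sigma$, the observation that the solutions can be taken with all mass on constituents contained in $HK$ so that $I_0=\emptyset$ and Theorem~\ref{CNES-PREV-I_0-INT} reduces coherence to solvability, and then the Fr\'echet--Hoeffding interval. The only (immaterial) difference is computational: the paper notes that $Q_5,\dots,Q_8$ are convex combinations of $Q_1,\dots,Q_4$, collapses the hull to a tetrahedron and solves the resulting four-unknown system in closed form, whereas you bound the linear functional $z=\lambda_1+y\lambda_3+x\lambda_7$ over the full eight-variable polytope and exhibit extremal solutions; note also that your extremal solutions, being supported inside $HK$, are exactly what is needed to make the asserted verification of $I_0=\emptyset$ explicit.
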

\begin{proof}
First of all we observe that, by logical independence of $A,H,B,K$, the assessment $(x,y)$ is coherent for every  $(x,y)\in [0,1]^2$. We will determine the values $z', z''$ by the geometrical approach described in Subsection \ref{SEC-COER-CPA}. The constituents associated with the family
$\F =  \{A|H,\, B|K,\, (A|H) \wedge (B|K)\}$ and contained in $H \vee K$ are
\[\begin{array}{llll}
 C_1=AHBK, &  C_2=AHB^cK, & C_3=A^cHBK, & C_4=A^cHB^cK,\\
 C_5=AHK^c, & C_6=A^cHK^c, & C_7=H^cBK, & C_8=H^cB^cK.
\end{array}
\]
The associated  points $Q_h$'s are
\[
\begin{array}{llll}
Q_1 = (1,1,1) \,, &  Q_2 = (1,0,0) \,, & Q_3 = (0,1,0) \,, &  Q_4 = (0,0,0) \,, \\
Q_5 = (1,y,y) \,, &  Q_6 = (0,y,0) \,, & Q_7 =(x,1,x) \,, & Q_8 = (x,0,0)  \,.
\end{array}
\]
Considering the convex hull $\I$ of $Q_1, \ldots, Q_8$, the coherence of prevision assessment $\M=(x,y,z)$ on $\F$ requires that the condition $\M \in \I$ be satisfied, which amounts to solvability of the following system
\[\begin{array}{l}
(\S) \hspace{1 cm}
\M=\sum_{h=1}^8 \lambda_hQ_h,\;
\sum_{h=1}^8 \lambda_h=1,\; \lambda_h\geq 0,\, \forall \, h \,.
\end{array}
\]
We observe that
\[
\begin{array}{ll}
Q_5=yQ_1+(1-y)Q_2, & Q_6=yQ_3+(1-y)Q_4, \\ Q_7=xQ_1+(1-x)Q_3, & Q_8=xQ_2+(1-x)Q_4;
\end{array}\]
then, the convex hull $\I$  is the tetrahedron with vertices $Q_1,Q_2,Q_3,Q_4$. Thus, $(\S)$
is equivalent to the system
\[\begin{array}{l}
(S')
\hspace{1 cm}
\P=\sum_{h=1}^4 \lambda'_hQ_h,\;
\sum_{h=1}^4 \lambda_h'=1,\; \lambda_h'\geq 0,\, \forall \, h \,,
\end{array}
\]
with
\[
\begin{array}{ll}
\lambda_1'=\lambda_1+y\lambda_5+x\lambda_7, \;& \lambda_2'=\lambda_2+(1-y)\lambda_5+x\lambda_8,\\
\lambda_3'=\lambda_3+y\lambda_6+(1-x)\lambda_7, \;&
\lambda_4'=\lambda_4+(1-y)\lambda_6+(1-x)\lambda_8\,.
\end{array}\] \\
Then, $\M \in \I$ if and only if $(\S')$ is solvable. We observe that $(\S')$ can be written as
\[
(S') \hspace{0.5 cm}
\lambda_1'+\lambda_2'=x\,,\; \lambda_1'+\lambda_3'=y\,,\;
\lambda_1'=z\,,\;
\lambda_1'+\lambda_2'+\lambda_3'+\lambda_4'=1, \; \lambda_h'\geq 0,\, \forall \, h \,;
\]
%
that is
\[
(S') \hspace{0.5 cm}
\lambda_1' = z\,,\; \lambda_2' = x - z\,,\; \lambda_3' = y - z \,,\;
\lambda_4' = z - (x+y-1), \; \lambda_h'\geq 0,\, \forall \, h \,.
\]
As it can be easily verified, $(S')$ is solvable if and only if
\[
\max\{x+y-1,0\} =  z' \leq z \leq z'' = \min\{x,y\} \,.
\]
 Moreover, for each solution $(\lambda_1', \lambda_2', \lambda_3', \lambda_4')$ of $(\S')$ the vector  $(\lambda_1, \ldots, \lambda_8)=(\lambda_1', \lambda_2', \lambda_3', \lambda_4',0,0,0,0)$ is a solution of $(\S)$ such that
\[\begin{array}{l}
\sum_{r: C_r \subseteq H} \lambda_r = \sum_{r: C_r \subseteq K} \lambda_r = \sum_{r: C_r \subseteq H \vee K} \lambda_r = 1 > 0 \,,
\end{array}
\]
and hence $I_0 = \emptyset$; then, by Theorem \ref{CNES-PREV-I_0-INT}, the solvability of $(\S)$ is also sufficient for the coherence of $\M$. Therefore, the extension $\pr[(A|H) \wedge (B|K)] = z$ of the assessment $(x,y)$, with $(x,y) \in [0,1]^2$, is coherent if and only if
\begin{small}
\[
max \; \{P(A|H)+P(B|K)-1,0\}  \leq \pr[(A|H) \wedge (B|K)]  \leq  min \;   \{P(A|H),P(B|K)\}\,.
 \]
\end{small}
\end{proof}
We remark that for the quasi conjunction $\C(A|H,B|K)$ only holds the inequality on the lower bound; indeed, the extension $P[\C(A|H,B|K)] = \gamma$ of the assessment $(x,y)$ is coherent if and only if $\gamma' \leq \gamma \leq \gamma''$, where \linebreak
$\gamma' = z'=\max\{x+y-1,0\}$ and $\gamma'' = \frac{x+y-2xy}{1-xy}$ if $(x,y) \neq (1,1)$; $\gamma'' = 1$ if $(x,y) = (1,1)$. We observe that: $\gamma'' \geq max\{x,y\} \geq min\{x,y\} = z''$. \\
A probabilistic analysis of the lower and upper bounds for the quasi conjunction, in terms of t-norms and t-conorms, has been given in \cite{GiSa10,GiSa12c}.
\section{Negation and Disjunction}
Given any coherent assessment $(x,y,z)$ on $\{A|H, B|K, (A|H) \land (B|K)\}$, it holds that $(A|H) \land (B|K)\in \{1,0,x,y,z\} \subset [0,1]$. We recall that for conditional events the negation is usually defined as $(E|H)^c=E^c|H = (1-E)|H$. In our approach we have $(1-E)|H = 1 - E|H$; hence $(E|H)^c = 1 - E^c|H$. Then, for the conjunction of two conditional events, we give the following
\begin{definition}{\rm Given any conditional events $A|H, B|K$, the negation of the conjunction $(A|H) \land (B|K)$ is defined as
\[
[(A|H) \land (B|K)]^c = 1 - (A|H) \land (B|K) \,.
\]
}\end{definition}
We observe that
\[
\begin{array}{l}
[(A|H) \land (B|K)]^c = 1 - min \{A|H, B|K\} \,|\, (H \vee K) =\\
= (1 - min \{A|H, B|K\}) \,|\, (H \vee K) = max\{A^c|H, B^c|K\} \,|\, (H \vee K) \,.
\end{array}
\]
Then, based on the relation $A \vee B = (A^cB^c)^c$ (De Morgan's Law), for the disjunction of two conditional events we give the following
\begin{definition}{\rm Given any conditional events $A|H, B|K$, the disjunction $(A|H) \vee (B|K)$ is defined as: $(A|H) \vee (B|K) = [(A^c|H) \land (B^c|K)]^c$.
}\end{definition}
We observe that
\[
\begin{array}{l}
(A|H) \vee (B|K) = 1 - min \{A^c|H, B^c|K\} \,|\, (H \vee K) =\\
= (1 - min \{A^c|H, B^c|K\}) \,|\, (H \vee K) = max\{A|H, B|K\} \,|\, (H \vee K) \,.
\end{array}
\]
If we assess $P(A|H)=x, P(B|K)=y, \pr[(A|H) \vee (B|K)]=\gamma$, then
\[
(A|H) \vee (B|K)= 1 \cdot (AH \vee BK) + x \cdot H^cB^cK + y \cdot A^cHK^c + \gamma \cdot H^cK^c \,.
\]
{\em Prevision sum rule}. The classical formula $P(A \vee B) = P(A) + P(B) - P(AB)$ still holds for conjunction and disjunction of conditional events. In fact, by recalling (\ref{XgHvK}), we have
\begin{small}\[
(A|H) \vee (B|K) + (A|H) \wedge (B|K) = [min\{A|H, B|K\} + max\{A|H, B|K\}]|(H \vee K) =
\]
\[
= (A|H + B|K)|(H \vee K) = (A|H)|(H \vee K) + (B|K)|(H \vee K) = A|H+B|K\,.
\]
\end{small}
Then:
$\pr[(A|H) \vee (B|K)]=\pr(A|H)+\pr(B|K) -\pr[(A|H) \wedge (B|K)]$. Finally, assuming $A,H,B,K$ logically independent and defining $P(A|H)=x, P(B|K)=y, \pr[(A|H) \wedge (B|K)]=z, \pr[(A|H) \vee (B|K)]=\gamma$, it holds that $\gamma = x+y-z$ and from ($\ref{LOW-UPPER}$) we obtain $max  \{x,y\}  \leq z  \leq  min   \{x+y-1,1\}$, that is
\begin{small}\[
max  \{P(A|H),P(B|K)\}  \leq \pr[(A|H) \vee (B|K)]  \leq  min   \{P(A|H)+P(B|K)-1,1\} \,.
\]\end{small}
\section{Conclusions}
In this paper we have given some results on finite conditional random quantities and conditional prevision assessments in the setting of coherence. We have proposed a suitable representation for conditional random quantities which includes in particular the case of the (indicators of) conditional events represented as numerical quantities, with values 1, or 0, or $p$, where $p$ is the probability assessment for the given conditional event. By this representation we have given a meaning to the (iterated) conditional random quantity $(X|H)|K$ as a suitable conditional random quantity, by showing that $(X|H)|K \neq X|HK$, with $(X|H)|K = X|HK$ if $H \subseteq K$. Then, we have examined for two conditional events the logical operation of conjunction, its negation and by De Morgan's Law the associated disjunction. We recall that this problem has been largely studied by many authors in literature, especially in the field of artificial intelligence. Based on the recent paper by S. Kaufmann, we have shown that in the setting of coherence the logical operations can be defined in a natural way; moreover, the result of a conjunction or a disjunction of two conditional events is a conditional random quantity. In our paper we have obtained,  in a simple and direct way, the results on conjunction by Kaufmann and other general results. In particular we have determined the lower and upper bounds for the conjunction and disjunction of two conditional events, by showing that the classical properties valid for the conjunction and disjunction of two unconditional events continue to hold. In a future work we will deepen the study of conditional random quantities in the setting of coherence and we will analyze in general the operation of iterated conditioning given in \cite{GiSa13a}.

\bibliographystyle{sl}


\AuthorAdressEmail{Angelo Gilio}{Dipartimento di Scienze di Base e Applicate per l'Ingegneria\\
University of Rome "La Sapienza"\\
 Via A. Scarpa, 16 - 00161 Roma, Italy
}{angelo.gilio@sbai.uniroma1.it}
%
%
\AdditionalAuthorAddressEmail{Giuseppe Sanfilippo}{
University of Palermo\\
Viale delle Scienze, ed. 13 - 90128 Palermo, Italy
}
{giuseppe.sanfilippo@unipa.it}

\end{document}